\title[Graphical partitions and comparability]{Asymptotic bounds on graphical partitions and partition comparability}
\author{Stephen Melczer}
\address{LaCIM, Universit\'e du Qu\'ebec \`a Montr\'eal, CP 8888, Succ. Centre-ville, Montr\'eal QC H3C 3P8, Canada.}
\email{smelczer@uwaterloo.ca}
\author{Marcus Michelen}
\address{University of Illinois at Chicago, Department of Mathematics, Statistics, and Computer Science, 851 S.\ Morgan Street, Chicago, Illinois 60607-7045, USA.}
\email{michelen.math@gmail.com}
\author{Somabha Mukherjee}
\address{Department of Statistics, The Wharton School, University of Pennsylvania, 3730 Walnut Street, Philadelphia, PA 19104-3615, USA.}
\email{somabha@wharton.upenn.edu}
\date{\today}
\newtheorem{theorem}{Theorem}
\newtheorem{lemma}[theorem]{Lemma}
\newtheorem*{claim*}{Claim}
\newtheorem{problem}[theorem]{Problem}
\newtheorem{prop}[theorem]{Proposition}
\theoremstyle{definition} 
\newtheorem{question}[theorem]{Question}
\theoremstyle{remark}
\renewcommand{\P}{\mathbf{P}}
\newcommand{\E}{\mathbf{E}}
\newcommand{\eps}{\varepsilon}
\begin{document}
		
\begin{abstract} 
	An integer partition is called graphical if it is the degree sequence of a simple graph. We prove that the probability that a uniformly chosen partition of size $n$ is graphical decreases to zero faster than $n^{-.003}$, answering a question of Pittel.  A lower bound of $n^{-1/2}$ was proven by Erd\H{o}s and Richmond, and so this demonstrates that the probability decreases polynomially.  Key to our argument is an asymptotic result of Pittel characterizing the joint distribution of the first rows and columns of a uniformly random partition, combined with a characterization of graphical partitions due to Erd\H{o}s and Gallai.  Our proof also implies a polynomial upper bound for the probability that two randomly chosen partitions are comparable in the dominance order. 
\end{abstract}

\maketitle 

\section{Introduction}
\label{introduction}

An integer partition is called \emph{graphical} if it can be realized as the size-ordered degree sequence of a simple graph (with no loops or edges).  In his famous 1736 paper on the K\"onigsberg bridge problem, arguably the origin of graph theory, Euler~\cite{Euler1736} gave a necessary condition for a finite sequence of positive integers $(d_1,\dots,d_r)$ to be the degree sequence of a graph: the sum $d_1+\cdots+d_r$ must be even\footnote{In Euler's original words~\cite[\S17]{Euler1736}, `Sequitur ergo ex hac observatione summam omnium pontium, qui in singulas regiones conducunt, esse numerum parem, quia eius dimidium pontium numero aequatur.' [It therefore follows that the total number of bridges leading to the areas must be an even number, because half of it equals the number of bridges.]}. In the nineteenth century, counting the number of graphs with a fixed degree sequence was used to describe the chemical bonds which could be formed between atoms (see, for instance, the 1875 paper of Cayley~\cite{Cayley1875} which concerns the enumeration of certain ``structurally isomeric'' molecules).

Havel~\cite{Havel1955} and Hakami~\cite{Hakimi1962} gave algorithms which take a fixed partition and decide whether or not it is graphical. More recent algorithms of this type were given by Barnes and Savage~\cite{BarnesSavage1995} and Kohnert~\cite{Kohnert2004}.  Seeking to classify graphical partitions, Gale and Ryser~\cite{gale1957theorem,ryser1957combinatorial} showed that given two partitions $\alpha,\beta \vdash n$, there is a \emph{bipartite} graph with one side having degree sequence given by $\alpha$ and the other by $\beta$ if and only if $\alpha \preceq \beta'$.  Here, the conjugate partition $\beta'$ is defined via 
$$\beta_i' = \# \{ j : \beta_j \geq i  \}$$
and the dominance order $\preceq$ is defined by 
\begin{equation}\label{eq:dominance}
\alpha \preceq \beta\ \iff\  \sum_{j = 1}^i \alpha_j \leq \sum_{j = 1}^i \beta_j \text{ for all }i\,.
\end{equation}

In 1960, Erd\H{o}s and Gallai~\cite{ErdosGallai1960} gave a necessary and sufficient condition for a partition to be graphical. Many alternative proofs of this result have been given, including in Harary~\cite{Harary1969}, Berge~\cite{Berge1973}, Choudum~\cite{Choudum1986}, and (having been optimized over the decades to one page of elementary calculations) by Tripathi et al.~\cite{TripathiVenugopalanWest2010}. Sierksma and Hoogeveen~\cite{SierksmaHoogeveen1991} give a proof and list seven equivalent conditions. For our work we use the following statement of the Erd\H{o}s and Gallai characterization.

\begin{lemma}[Erd\H{o}s-Gallai \cite{ErdosGallai1960}]
	\label{lem:graphical}
	A partition $\lambda$ is graphical if and only if 
	\begin{equation}\label{eq:graphical-criterion}
	\sum_{j = 1}^i \lambda_j' \geq \sum_{j = 1}^i \lambda_j + i,\qquad 1 \leq i \leq D(\lambda)
	\end{equation}
	where $D(\lambda) := \max\{k : \lambda_k \geq k \text{ and } \lambda_k' \geq k  \} $ is the size of the Durfee square of $\lambda$ (the largest $k$ such that $\lambda$ contains at least $k$ parts of size $\geq k$).
\end{lemma}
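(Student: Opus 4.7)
The plan is to reduce the stated inequality to the classical Erd\H{o}s-Gallai condition and then prove the latter. I would first establish the equivalence by writing $\sum_{i=1}^k \lambda_i'$ as a double sum: expanding $\lambda_i' = \#\{j : \lambda_j \geq i\}$ and swapping the order of summation yields
$$\sum_{i=1}^k \lambda_i' \;=\; \sum_{j \geq 1} \min(\lambda_j, k).$$
For $k \leq D(\lambda)$ the first $k$ parts satisfy $\lambda_j \geq k$, so the first $k$ summands each equal $k$, giving $\sum_{i=1}^k \lambda_i' = k^2 + \sum_{j > k} \min(\lambda_j, k)$. Substituting into~\eqref{eq:graphical-criterion} and rearranging, the condition is equivalent to the classical form
$$\sum_{j=1}^k \lambda_j \;\leq\; k(k-1) + \sum_{j>k} \min(\lambda_j, k), \qquad 1 \leq k \leq D(\lambda).$$

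For the forward direction (necessity), I would suppose $G$ realizes $\lambda$ and order its vertices $v_1, \ldots, v_r$ by decreasing degree. Fix $i$ and tally the edge-endpoints incident to $\{v_1, \ldots, v_i\}$, which together equal $\sum_{j=1}^i \lambda_j$: edges internal to this set contribute $2$ each, hence at most $i(i-1)$ in total, while each vertex $v_j$ with $j > i$ contributes at most $\min(i, \lambda_j)$ endpoints (at most $i$ from its neighbors in the set, but also at most $\lambda_j$ by its own degree). This yields the classical inequality directly, and hence~\eqref{eq:graphical-criterion} for all $i \leq D(\lambda)$.

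For the converse (sufficiency), the main obstacle is constructing a realizing graph from inequalities alone. My plan would be to follow the short inductive argument of Tripathi-Venugopalan-West~\cite{TripathiVenugopalanWest2010}: induct on $\sum_j \lambda_j$. Given $\lambda$ satisfying Erd\H{o}s-Gallai, identify a \emph{critical index} $k^*$ at which the classical inequality is closest to being tight, and modify $\lambda$ by decrementing two carefully chosen entries to obtain a sequence $\lambda^*$ of strictly smaller total. The delicate technical step, and the main obstacle, is verifying that $\lambda^*$ still satisfies the Erd\H{o}s-Gallai inequalities; this requires a short case analysis based on where the decremented entries sit relative to $k^*$ and whether the critical index shifts. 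By the inductive hypothesis $\lambda^*$ is realized by some graph $G^*$, and adding the edge between the two vertices whose degrees were decremented produces a realization of $\lambda$.
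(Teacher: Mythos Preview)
The paper does not actually prove Lemma~\ref{lem:graphical}; it is quoted as a classical result from~\cite{ErdosGallai1960}, with pointers to several later proofs including the one-page argument of Tripathi--Venugopalan--West~\cite{TripathiVenugopalanWest2010}. So there is no ``paper's own proof'' to compare against---you are supplying a proof where the authors simply cite the literature.

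That said, your sketch is essentially the standard route and would work. The identity $\sum_{i=1}^k \lambda_i' = \sum_{j\geq 1}\min(\lambda_j,k)$ is correct, and for $k\le D(\lambda)$ it does reduce~\eqref{eq:graphical-criterion} to the classical Erd\H{o}s--Gallai inequality $\sum_{j=1}^k \lambda_j \le k(k-1)+\sum_{j>k}\min(\lambda_j,k)$. Your necessity argument is the usual double count and is fine. Two small points on sufficiency: first, what you describe as ``decrement two entries, apply induction, then add an edge'' is closer to Choudum's proof~\cite{Choudum1986} than to~\cite{TripathiVenugopalanWest2010}, whose argument instead builds up a realization from a subrealization; either works, but you should be clear about which you are invoking. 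Second, the classical Erd\H{o}s--Gallai theorem in its usual form requires the inequality for \emph{all} $k$, not just $k\le D(\lambda)$, so to close the loop you need the (easy, known) observation that the inequalities for $k>D(\lambda)$ are automatically implied---this is precisely why the Durfee-square truncation in the lemma's statement is legitimate, and it deserves one sentence.
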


Instead of counting how many partitions are graphical, it is often convenient to study the probability $p(n)$ that a uniformly random partition of size $n$ is graphical\footnote{Recall that Hardy and Ramanujan~\cite{HardyRamanujan1918} determined asymptotics for the total number of partitions of size $n$ as $n\rightarrow\infty$.}. In unpublished remarks from 1982, Wilf conjectured that $p(n)\rightarrow0$ as $n\rightarrow\infty$.   Independently, Macdonald conjectured in 1979 \cite[Ch.\ 1, \S 1, Ex.\ 18]{macdonald1998symmetric}  that if $r(n)$ is the probability that two uniformly random partitions of size $n$ are comparable in the dominance order, then $r(n) \to 0$.  The similarity of the criteria \eqref{eq:dominance} and \eqref{eq:graphical-criterion} suggests that these two seemingly disparate conjectures are in fact quite similar.

Seeking to prove Wilf's conjecture, Erd\H{o}s and Richmond~\cite{ErdosRichmond1993} gave an upper bound of $p(n) \leq .4258$.  Additionally, they used the Erd\H{o}s-Gallai condition \eqref{eq:graphical-criterion} along with the notion of a partition's \emph{rank} to prove that $p(n) \geq 1 - \pi(n-1)/\pi(n)$ for even $n$, where~$\pi(n)$ denotes the number of partitions of $n$. This result, also established in Rousseau and Ali~\cite{RousseauAli1995}, implies that when $n$ is even and sufficiently large then $p(n) \geq \pi/\sqrt{6n}$.  Around the same time, Rousseau and Ali~\cite{RousseauAli1994} also proved a similar upper bound of $p(n) \leq 1/4$ and thus $p(n)$ is bounded.

Both Macdonald's and Wilf's conjectures were finally confirmed in 1999 by Pittel~\cite{Pittel1999}, who analyzed the structure of random partitions and used a Kolmogorov zero-one law to show that $p(n)\to 0$ and $r(n) \to 0$ as $n\rightarrow0$. This work left open the speed with which they approached zero, and whether it was close to the $O(n^{-1/2})$ growth of the known lower bound for $p(n)$. Recently, Pittel~\cite{Pittel2018} returned to this question and showed that for $n$ sufficiently large, $\max\{ r(n),p(n) \} \leq \exp(-c\log(n)/\log\log(n))$ where $c=0.11$. Using numeric values for $p(n)$ computed by Kohnert~\cite{Kohnert2004}, Pittel asked whether this upper bound on $p(n)$ is tight for a value of $c$ near 0.25. Although $p(n)$ decays quite slowly, our primary result shows that this is not the case, and that $p(n)$ decays polynomially:

\begin{theorem}
	\label{thm:main}
	Let $p(n)$ denote the probability that a uniformly randomly chosen partition of $n$ is graphical. Then there exists a constant $C>0$ such that that
	$$p(n) \leq Cn^{-0.003297210314}\,.$$ 
\end{theorem}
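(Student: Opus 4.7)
The plan is to combine the Erd\H{o}s--Gallai criterion (Lemma~\ref{lem:graphical}) with Pittel's asymptotic description of the joint distribution of the first rows and columns of a uniformly random partition. For each $i$, let $E_i$ denote the event that $\sum_{j=1}^i \lambda_j' \geq \sum_{j=1}^i \lambda_j + i$. Lemma~\ref{lem:graphical} yields $p(n) \leq \P\big(\bigcap_{k=1}^m E_{i_k}\big)$ for any chosen indices $i_1 < \cdots < i_m \leq D(\lambda)$. I would select $m = \Theta(\log n)$ well-separated indices (for instance $i_k \approx 2^k$) and show that each $E_{i_k}$ fails with probability bounded away from $0$, and that the resulting events are asymptotically nearly independent, so that $\P\big(\bigcap_k E_{i_k}\big) \leq (1-\delta)^m = n^{-c}$ for appropriate constants $\delta,c>0$.

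\textbf{Analyzing a single event.} Pittel's limit law should provide that the joint distribution of the first few rows and columns of $\lambda$, after centering and rescaling by $\sqrt{n}$, has a smooth limiting density. The event $E_i$ is then a half-space event in these scaled coordinates, and conjugation symmetry $\lambda \stackrel{d}{=} \lambda'$ combined with the $+i$ shift on the right-hand side yields $\P(E_i) \leq 1 - \delta$ for some absolute $\delta > 0$. Conditioning on the rows and columns up to scale $i_k$ and applying Pittel's result to the ``residual'' partition beyond that scale, the conditional probability that $E_{i_{k+1}}$ fails should still be bounded below by a positive constant, provided $i_{k+1}/i_k$ is taken large enough. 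Multiplying then yields $p(n) \leq Cn^{-c}$, where the explicit exponent $0.003297\ldots$ would emerge from a numerical optimization over the choice of scales, the number $m$, and the probability $\delta$ extracted from integrating Pittel's density.

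\textbf{Main obstacle.} The crux is the near-independence step: the events $E_{i_k}$ live on a single random partition and are genuinely correlated, so one cannot hope for literal independence. What is needed is a quantitative form of Pittel's joint distribution theorem that controls how conditioning on the partition shape at scale $i_k$ perturbs the distribution of rows and columns at the coarser scale $i_{k+1}$, so that $\P\big(E_{i_{k+1}} \mid E_{i_1},\dots, E_{i_k}\big)$ remains bounded away from $1$. This likely requires either a Markov-type property for the row/column process (perhaps via the independent-geometric Boltzmann model of random partitions) or an explicit tilting argument bounding the effect of the conditioning on Pittel's joint density. The same scheme, applied to the defining inequalities of the dominance order in~\eqref{eq:dominance} for a pair of independent uniform partitions, should simultaneously give the claimed polynomial upper bound on $r(n)$.
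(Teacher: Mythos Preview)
Your outline has the right skeleton (Erd\H{o}s--Gallai plus Pittel plus geometrically spaced test indices), but the step you flag as the ``main obstacle'' is exactly where your argument is incomplete, and the paper resolves it by a route you do not anticipate.  The point you are missing is that Pittel's Lemma~\ref{lem:limit-thm} already delivers the independence you need, but not in the form ``$E_{i_k}$ is nearly independent of $E_{i_{k+1}}$.''  Rather, it expresses the first $\lfloor n^\gamma\rfloor$ columns as functions of one i.i.d.\ exponential sequence $(X_j)$ and the first $\lfloor n^\gamma\rfloor$ rows as functions of an \emph{independent} sequence $(X_j')$.  After this substitution the Erd\H{o}s--Gallai inequalities become events of the form $\sum_{j\le i}\log(S_j'/S_j)\ge -1$, and one linearizes to obtain a single random walk $\sum_{j\le i}(R_j'-R_j)/j$ with $R_j=S_j-j$.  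No conditioning on a ``residual partition,'' Markov property, or tilting of the Boltzmann measure is needed; the correlation between the events $E_i$ for different $i$ is now purely the correlation inside a one-dimensional random walk.

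That remaining across-index correlation is then handled by (i) a KMT-type coupling (Lemma~\ref{lem:KMT}) replacing the exponential walk by the Gaussian process $Z_k=\sum_{j\le k}B_j/j$, and (ii) the Li--Shao anti-concentration inequality (Lemma~\ref{lem:Li-Shao}) for $\max_{k\le N}Z_k$.  The geometric spacing you propose does appear, but at the Gaussian level: one chooses $s_i\approx \rho^i$ with $\rho$ large enough (numerically $\rho_\ast\approx 1529$) that $\sum_j|\mathrm{Corr}(Z_{s_i},Z_{s_j})|\le 5/4$, which is the hypothesis Li--Shao requires.  The exponent $0.003297\ldots$ then arises by balancing three explicit error terms---the total-variation error $n^{-1/2+2\gamma}$ from Pittel, a Chernoff error $n^{-\delta/2}$, and the Li--Shao bound $n^{-\beta(\gamma-\delta)}$ with $\beta\approx 0.01364$---not from integrating a limiting density as you suggest.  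Your proposed route via conditional distributions of the partition at nested scales might be salvageable in principle, but it would require substantial new input; the paper's path through independent exponentials, Brownian motion, and Gaussian anti-concentration sidesteps that difficulty entirely.
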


A key ingredient is a limit theorem of  Pittel~\cite{Pittel2018} which, for every \mbox{$\varepsilon > 0$}, describes the joint behavior of the first $n^{1/4 - \eps}$ rows \emph{and} columns of a uniformly random permutation.  We will make strong use of this result.

\begin{lemma}[{Pittel~\cite{Pittel2018}}]
	\label{lem:limit-thm}
	Let $\lambda$ be a partition of $n$ chosen uniformly at random.  For each $\gamma < 1/4$, the total variation distance between the joint distribution of the  $\lfloor n^\gamma\rfloor$ tallest columns and the $\lfloor n^\gamma \rfloor$ longest rows of $\gamma$ from the distribution of the random tuple $$\left\{\left\lceil \frac{n^{1/2}}{c} \log \frac{n^{1/2} /c}{ \sum_{j = 1}^i X_j} \right\rceil, \left\lceil\frac{n^{1/2}}{c} \log \frac{n^{1/2}/c }{\sum_{j = 1}^i X_j' } \right\rceil \right\}_{1 \leq i \leq \lfloor n^{\gamma} \rfloor}$$ is $O\left(n^{-1/2 + 2\gamma} \log^3(n)\right)$, where $c := \pi/\sqrt{6}$ and the variables $(X_j)_{j\geq 1}$ and $(X_j')_{j \geq 1}$ are mutually independent exponential random variables with mean~$1$.
\end{lemma}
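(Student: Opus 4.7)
The plan is to invoke Fristedt's classical conditioning device: set $q = e^{-c/\sqrt{n}}$ with $c = \pi/\sqrt{6}$, and let $m_1, m_2, \ldots$ be independent geometric random variables with $\P(m_k = j) = (1 - q^k) q^{jk}$. Writing $\lambda = (1^{m_1} 2^{m_2} \cdots)$ yields a random partition of the random integer $N := \sum_k k m_k$, and conditioning on $N = n$ recovers the uniform law on partitions of $n$. Under this coupling the columns satisfy $\lambda_i' = \sum_{k \geq i} m_k$, while the rows $\lambda_i$ are determined by the positions of the largest indices $k$ with $m_k \geq 1$; contributions from $m_k \geq 2$ at such large $k$ are negligible, because $q^k$ is then small.

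First I would analyze the unconditioned distribution, which is considerably easier. For the top row, a direct computation gives
$$\P(\lambda_1 \leq x) = \prod_{k > x}(1 - q^k) = \exp\left(-\frac{\sqrt{n}}{c}\,e^{-cx/\sqrt{n}}(1 + o(1))\right),$$
and solving for $x$ shows $\lambda_1 \approx (\sqrt{n}/c)\log((\sqrt{n}/c)/X_1)$ with $X_1 \sim \mathrm{Exp}(1)$. Extending this to the top $\lfloor n^\gamma\rfloor$ rows uses that the point process $\{k : m_k \geq 1\}$ is, on the relevant range of $k$, well-approximated by an inhomogeneous Poisson process of intensity $q^k$, whose $i$-th largest point is precisely $(\sqrt{n}/c)\log((\sqrt{n}/c)/T_i)$ with $T_i = X_1 + \cdots + X_i$. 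For the top columns, $\lambda_i' = \sum_{k \geq i} m_k$ is a sum of independent geometrics and its joint law can be computed directly via the analogous exponential approximation, introducing an independent copy $(X_j')$ of the exponential sequence. The asymptotic independence of rows and columns appearing in the statement then follows from the observation that the top $n^\gamma$ columns depend almost entirely on $(m_k)_{k \leq n^{1/2}}$, while the top $n^\gamma$ rows depend on $(m_k)_{k \gtrsim n^{1/2}\log\log n}$, and these two ranges of indices interact only through a negligible error term.

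The final step is to remove the conditioning on $N = n$. Since the top $\lfloor n^\gamma\rfloor$ rows and columns determine $N$ only through a sum of $O(n^\gamma)$ terms, a local central limit theorem for $N$ (whose variance is of order $n^{3/2}$) provides the ratio between the conditional and free densities, with an error that can be tracked carefully to produce the advertised $O(n^{-1/2+2\gamma}\log^3 n)$ total variation bound. I expect this last step to be the main obstacle: one needs a local central limit theorem with remainder uniform enough to transfer a multivariate distributional statement rather than just a marginal one. This hinges on a saddle-point analysis of $\prod_k (1-q^k)^{-1}$ near its saddle with explicit error terms, and the $\log^3 n$ factor should arise from the logarithmic order of the top rows, which dictates the size of the window over which the local limit theorem must be uniform.
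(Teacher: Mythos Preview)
The paper does not prove this lemma at all: it is stated as Lemma~\ref{lem:limit-thm} and attributed to Pittel~\cite{Pittel2018}, with no proof given. The authors simply invoke it as a black box (``We will make strong use of this result'') and proceed to apply it in Lemma~\ref{lem:firststep}. So there is no ``paper's own proof'' to compare against.

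That said, your sketch is a reasonable outline of how Pittel's argument actually goes. Fristedt's conditioning device with $q = e^{-c/\sqrt{n}}$ is indeed the engine, the asymptotic independence of large rows and large columns comes from their dependence on essentially disjoint index ranges of the multiplicities $(m_k)$, and the removal of the conditioning on $\{N = n\}$ via a local limit theorem with quantitative remainder is where the work lies. You have correctly identified this last step as the bottleneck, and your anticipation that the $\log^3 n$ factor arises from the size of the window over which uniformity is needed is on target. What you have written is a plausible roadmap rather than a proof---turning the Poisson-process heuristic for the top rows into a total-variation bound, and making the local CLT uniform over the relevant $2\lfloor n^\gamma\rfloor$-dimensional event, each require substantial care that Pittel's paper carries out in detail. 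If you were asked to supply a proof here, the honest move would be to cite \cite{Pittel2018} as the paper does; if you want to reconstruct it, your outline is pointed in the right direction but is far from complete.
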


Using Lemmas \ref{lem:graphical} and \ref{lem:limit-thm} in tandem converts the problem of upper bounding $p(n)$ into a question concerning random walk with exponential increments.  We study the resulting random walk question by comparing to Brownian motion and analyzing the Gaussian process that emerges.  By similar methods, we also demonstrate a polynomial upper bound for the probability that two partitions are comparable in the dominance order:

\begin{theorem}
	\label{thm:comparable}
	Let $r(n)$ denote the probability that for two uniformly randomly chosen partitions $\lambda,\mu$ of $n$ we have $\lambda \preceq \mu$.  Then there is a constant $C >0$ so that $$ r(n) \leq C n^{-0.003297210314}\,.$$
\end{theorem}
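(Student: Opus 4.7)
The plan is to parallel the analysis of Theorem \ref{thm:main}, substituting the dominance condition (which compares partial sums of rows of two independent partitions) for the Erd\H{o}s-Gallai condition (which compares partial sums of rows and columns of a single partition). Both reductions are effected by Lemma \ref{lem:limit-thm}.

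Fix $\gamma \in (0, 1/4)$ close to $1/4$ and set $N = \lfloor n^\gamma \rfloor$. Since $\lambda \preceq \mu$ implies $\sum_{j=1}^i \lambda_j \leq \sum_{j=1}^i \mu_j$ for every $i$, it suffices to upper bound the probability that this restricted inequality holds for all $1 \leq i \leq N$. Apply Lemma \ref{lem:limit-thm} independently to $\lambda$ and $\mu$ with mutually independent Exp$(1)$ families $(X_j)_{j \geq 1}$ and $(Y_j)_{j \geq 1}$ (only the row portion of the lemma is needed). At the cost of a total-variation error of $O(n^{-1/2 + 2\gamma} \log^3 n)$, we may replace $\lambda_i$ by $\lceil \tfrac{n^{1/2}}{c} \log \tfrac{n^{1/2}/c}{S_i^X} \rceil$ and $\mu_i$ by $\lceil \tfrac{n^{1/2}}{c} \log \tfrac{n^{1/2}/c}{S_i^Y} \rceil$, where $S_i^X := \sum_{j \leq i} X_j$ and $S_i^Y := \sum_{j \leq i} Y_j$; dropping the ceilings introduces an error of at most $O(N)$ in each partial sum, negligible at the scale $n^{1/2}\log n$ of the partial sums themselves.

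After this reduction the dominance inequality rearranges to the persistence event
\[
W_i \,:=\, \sum_{j=1}^i \bigl( \log S_j^X - \log S_j^Y \bigr) \,\geq\, 0, \qquad \text{for all } 1 \leq i \leq N.
\]
This is the same type of random-walk problem that drives the proof of Theorem \ref{thm:main}: in that setting Pittel's lemma yields the independent row and column families $(X_j),(X_j')$ attached to a single partition, and the Erd\H{o}s-Gallai inequality rearranges to $\sum_{j \leq i}(\log S_j^X - \log S_j^{X'}) \geq ic/n^{1/2}$ for all $i \leq D(\lambda)$; the linear drift $ic/n^{1/2}$ is negligible compared to the $\Theta(\sqrt{i})$ typical fluctuations of this walk whenever $i \leq N$, so the dominance persistence event is asymptotically the same as the Erd\H{o}s-Gallai one, with the family $(Y_j)$ playing the role of $(X_j')$.

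The main obstacle---shared with Theorem \ref{thm:main}---is to estimate the persistence probability of $W_i$. The approach is to couple $\log S_j^X$ and $\log S_j^Y$ to Brownian motion via a KMT-type embedding and then perform the logarithmic time change $j = e^\tau$; the rescaled process $\tau \mapsto W_{\lfloor e^\tau \rfloor}$ converges (up to lower-order corrections) to a Gaussian process whose covariance structure matches the one appearing in Theorem \ref{thm:main}. An explicit analysis of this Gaussian process's persistence exponent yields the polynomial bound $\mathbf{P}(W_i \geq 0 \text{ for all } i \leq N) = O(n^{-0.003297210314\ldots})$, and balancing this against the $O(n^{-1/2 + 2\gamma} \log^3 n)$ coupling error by taking $\gamma$ sufficiently close to $1/4$ gives the stated bound on $r(n)$.
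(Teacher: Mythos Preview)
Your sketch is correct and matches the paper's approach: Theorems \ref{thm:main} and \ref{thm:comparable} are proved simultaneously there, with Lemma \ref{lem:firststep} reducing both $p(n)$ and $r(n)$ to the same persistence event $\sum_{j\leq i}\log(S_j'/S_j)\geq -1$ for all $i\leq n^\gamma$ (your $(Y_j)$ playing the role of the paper's $(X_j')$), after which the logarithm is linearized, a KMT coupling is applied, and the resulting Gaussian process $Z_n=\sum_{k\leq n}B_k/k$ is bounded via the Li--Shao anti-concentration inequality. One minor imprecision: the exponent $0.003297\ldots$ is not the raw persistence exponent of the Gaussian process but rather the outcome of optimizing $\min\{\tfrac12-2\gamma,\,\tfrac{\delta}{2},\,\beta(\gamma-\delta)\}$ with $\beta\approx 0.01364$, so it does not come solely from ``taking $\gamma$ sufficiently close to $1/4$.''
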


Macdonald's 1979 conjecture and Pittel's result that $r(n) \to 0$ sheds light on the typical behavior of Kostka numbers.   Recall that given two partitions $\lambda,\mu \vdash n$, the \emph{Kostka number} $K_{\lambda,\mu}$ is the number of semistandard Young tableaux of shape $\lambda$ and weight $\mu$; for equivalent definitions in terms of Schur polynomials or the representation theory of the symmetric group, see the standard references \cite{macdonald1998symmetric,sagan2013symmetric,stanley}.  Clearly it is the case that $K_{\lambda,\mu} \geq 0$; it is a standard exercise to show that positivity coincides precisely with comparability \cite[Theorem I.6.5]{macdonald1998symmetric}: $$K_{\lambda,\mu} > 0 \ \iff \ \mu \preceq \lambda\,. $$

Theorem \ref{thm:comparable} shows that the $K_{\lambda,\mu}$ is typically $0$, and only positive with probability decaying polynomially quickly. 

\section{Graphical Partitions and Comparability}
In this section we prove Theorems~\ref{thm:main} and~\ref{thm:comparable}. For $j\geq1$ we always write $X_j$ and~$X_j'$ for mutually independent exponential random variables with mean $1$.

\subsection{Reducing to a random walk bound}
\label{sec:randomwalk}	
Both of the quantities $p(n)$ and $r(n)$ of interest will be upper bounded using exponential random walks. Let
\[ \gamma  \in (0,1/4), \qquad S_j := \sum_{i=1}^j X_i, \qquad \text{and} \qquad S_j' := \sum_{i=1}^j X_i', \] 
and define the shifted quantities $R_j := S_j -j$ and $R_j' := S_j' - j$.  The goal of this subsection is to establish the following bounds:
\begin{prop}
\label{prop:uppersum}
For all large $n$ and $\delta > 0$, we have 
\begin{align*}
	\max\{r(n), p(n)\} &\leq O\left(n^{-1/2 + 2 \gamma} \log^3(n)+n^{-\delta/2}\right) \\
	&\quad+ \P\left(\min_{1\leq \ell \leq \lfloor n^{\gamma}\rfloor}\sum_{j = 1}^\ell \frac{R_j' - R_j}{j} \geq -5 n^{\delta/2} \lceil \log^{3/2}(n) \rceil \right)~.
	\end{align*}
\end{prop}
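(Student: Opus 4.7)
The plan is to handle $p(n)$ and $r(n)$ in parallel: in both cases Lemma~\ref{lem:limit-thm} replaces the top $\lfloor n^\gamma\rfloor$ rows (and, for graphicality, columns) of the partition by explicit functions of the exponential partial sums; the Erd\H{o}s--Gallai criterion or the dominance inequality~\eqref{eq:dominance} is then unpacked into a statement about $\sum_{j=1}^i \log(S_j/S_j')$, which a Taylor expansion converts into the desired walk $\sum_{j=1}^i (R_j - R_j')/j$.

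For graphicality, apply Lemma~\ref{lem:limit-thm} to couple $(\lambda_i,\lambda_i')_{i\leq\lfloor n^\gamma\rfloor}$ to
\[
Y_i := \left\lceil \tfrac{n^{1/2}}{c}\log\tfrac{n^{1/2}/c}{S_i}\right\rceil, \qquad Y_i' := \left\lceil \tfrac{n^{1/2}}{c}\log\tfrac{n^{1/2}/c}{S_i'}\right\rceil
\]
at total-variation cost $O(n^{-1/2+2\gamma}\log^3 n)$; for comparability, apply the lemma independently to $\lambda$ and $\mu$, using only the row marginal of each, to obtain $\lambda_i \approx Y_i$ and $\mu_i \approx Y_i'$ with the same accuracy. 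Since $Y_{\lfloor n^\gamma\rfloor}$ is of order $n^{1/2}\log n \gg n^\gamma$, one has $D(\lambda) \geq \lfloor n^\gamma\rfloor$ outside an event of probability $O(n^{-\delta/2})$ (via Chebyshev on $S_{\lfloor n^\gamma\rfloor}$), so the Erd\H{o}s--Gallai inequality $\sum_{j=1}^i (\lambda_j'-\lambda_j)\geq i$ is necessary for every $i\leq\lfloor n^\gamma\rfloor$. Each $Y_j$ differs from $a_j := \tfrac{n^{1/2}}{c}\log\tfrac{n^{1/2}/c}{S_j}$ by at most $1$, so absorbing ceiling discrepancies and dividing by $n^{1/2}/c$ reduces graphicality to
\[
\sum_{j=1}^i \log\frac{S_j}{S_j'} \geq 0 \qquad \text{for every } 1 \leq i \leq \lfloor n^\gamma\rfloor,
\]
and the same bookkeeping shows that the dominance condition $\sum_{j=1}^i \lambda_j \leq \sum_{j=1}^i \mu_j$ restricted to $i\leq\lfloor n^\gamma\rfloor$ is equivalent to the same inequality.

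Now expand $\log(1+x) = x - x^2/2 + O(|x|^3)$ to write
\[
\log\frac{S_j}{S_j'} = \frac{R_j - R_j'}{j} - \frac{R_j^2 - R_j'^2}{2j^2} + (\text{cubic remainder}).
\]
The second-order term $M_i := \tfrac12\sum_{j=1}^i (R_j^2 - R_j'^2)/j^2$ is a mean-zero martingale whose increment variances are $O(j^{-2})$ (since $\Var(R_j^2 - R_j'^2) = O(j^2)$), so Doob's maximal inequality followed by Chebyshev gives $\max_{i\leq\lfloor n^\gamma\rfloor}|M_i| \leq n^{\delta/2}$ outside an event of probability $O(n^{-\delta})$. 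The cubic remainder is controlled by $O(\log^{3/2}(n))$ on the high-probability event $\{|R_j|\leq\sqrt{j\log n}\text{ for all }j\leq \lfloor n^\gamma\rfloor\}$, and the $O(1)$ small-index terms (where the Taylor expansion is invalid) are bounded deterministically on the event $\{\min_{j\leq j_0}\min(X_j,X_j')\geq n^{-\delta/2}\}$ for a fixed $j_0$, whose complement has probability $O(n^{-\delta/2})$. Combining shows that graphicality (resp.\ comparability) implies
\[
\sum_{j=1}^i \frac{R_j - R_j'}{j} \geq -5n^{\delta/2}\lceil\log^{3/2}(n)\rceil \qquad \text{for all } 1 \leq i \leq \lfloor n^\gamma\rfloor,
\]
outside events of total probability $O(n^{-\delta/2})$.

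Finally, the exchange symmetry $(X_j, X_j')_{j\geq 1} \stackrel{d}{=} (X_j', X_j)_{j\geq 1}$ makes $\sum_{j=1}^\ell (R_j-R_j')/j$ and $\sum_{j=1}^\ell (R_j'-R_j)/j$ equal in law as processes, so the probabilities that their minima exceed $-5n^{\delta/2}\lceil\log^{3/2}(n)\rceil$ coincide, matching the form in the proposition. The principal obstacle is the simultaneous bookkeeping: ceiling discrepancies, the Durfee-square shortfall, the second- and third-order Taylor remainders, and the small-$j$ irregularities must all be held below $n^{\delta/2}\log^{3/2}(n)$ on a single event whose complement has probability $O(n^{-\delta/2})$. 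The crucial input is the cancellation in $R_j^2 - R_j'^2$: without it the second-order contribution would accumulate a systematic drift of order $\log n$, destroying the bound.
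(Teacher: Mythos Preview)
Your overall strategy---reduce via Lemmas~\ref{lem:graphical} and~\ref{lem:limit-thm} to an event of the form $\bigcap_i\{\sum_{j\le i}\log(S_j'/S_j)\ge -1\}$ and then linearize---matches the paper. The gap is in how you linearize. You claim that $M_i=\tfrac12\sum_{j=1}^i (R_j^2-R_j'^2)/j^2$ is a mean-zero martingale with increment variances $O(j^{-2})$ and invoke Doob. It is not a martingale: since $\E[R_j^2\mid\mathcal{F}_{j-1}]=R_{j-1}^2+1$, the conditional increment is $(R_{j-1}^2-R_{j-1}'^2)/(2j^2)$, not zero. Your concluding remark that the cancellation in $R_j^2-R_j'^2$ is ``the crucial input'' is also off: even the uncancelled drift $\E\sum_{j\le n^\gamma} R_j^2/(2j^2)=\sum 1/(2j)=O(\log n)$ is already far below the threshold $n^{\delta/2}\log^{3/2}n$, so nothing hinges on that cancellation.

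The paper sidesteps the second-order term entirely. It only needs the one-sided inequality $\log(1+x)\le x$, applied as
\[
\log\frac{S_j'}{S_j}=\log\!\left(1+\frac{R_j'-R_j}{j+R_j}\right)\le \frac{R_j'-R_j}{j}-\frac{(R_j'-R_j)R_j}{j^2(1+R_j/j)},
\]
and then bounds the correction term pointwise on the Chernoff event $\{|R_j/j|\le \log n/\sqrt{j}\}$, yielding a total error $\le 4\log^2 n\sum_j 1/j=O(\log^3 n)$. No martingale, no symmetric cancellation. Your proof can be repaired in the same spirit: on the event $\{|R_j|,|R_j'|\le C\sqrt{j\log n}\text{ for all }j\}$ one has $\sum_j|R_j^2-R_j'^2|/j^2=O(\log^2 n\cdot\log n^\gamma)=O(\log^3 n)$ deterministically, which is more than enough; but as written, the martingale step does not stand.
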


We split the proof of Proposition~\ref{prop:uppersum} into several smaller steps.  The key idea will be to use Lemmas \ref{lem:graphical} and \ref{lem:limit-thm} to obtain upper bounds on $p(n)$ and $r(n)$ that depend only on $S_j$ and $S_j'$.

\begin{lemma}
\label{lem:firststep}
The following asymptotic upper bound holds:
\begin{align*}
\max\{p(n),r(n)\} \leq \P\left( \bigcap_{i = 1}^{\lfloor n^\gamma \rfloor} \left\{\sum_{j = 1}^i \log\left(\frac{S_j'}{S_j}\right) \geq -1 \right\} \right) + O\left(n^{-1/2 + 2 \gamma} \log^3(n)\right)
\end{align*}
\end{lemma}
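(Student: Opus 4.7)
The plan is to upper bound $p(n)$ and $r(n)$ separately, in each case reducing the combinatorial event to one concerning the exponential walks $(S_j)$ and $(S_j')$ by invoking Pittel's limit theorem (Lemma~\ref{lem:limit-thm}), and then taking the maximum.

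For $p(n)$, the Erd\H{o}s--Gallai criterion (Lemma~\ref{lem:graphical}) tells us that if $\lambda$ is graphical then $\sum_{j=1}^i \lambda_j' \geq \sum_{j=1}^i \lambda_j + i$ for every $i \leq D(\lambda)$. First I would split on the event $\{D(\lambda) \geq \lfloor n^\gamma\rfloor\}$, on which the EG inequality holds throughout the target range. Lemma~\ref{lem:limit-thm} couples the first $\lfloor n^\gamma\rfloor$ rows and columns of $\lambda$ to the ceilings $\tilde\lambda_j := \lceil \tfrac{\sqrt{n}}{c}\log\tfrac{\sqrt{n}/c}{S_j'}\rceil$ and $\tilde\lambda_j' := \lceil \tfrac{\sqrt{n}}{c}\log\tfrac{\sqrt{n}/c}{S_j}\rceil$ at total-variation cost $O(n^{-1/2+2\gamma}\log^3 n)$, so that any event on the original rows and columns transfers at the same cost. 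Transporting the EG inequality to $\tilde\lambda$, extracting the fractional parts of the ceilings (whose total contribution lies in $(-i,i)$), and unpacking gives $\tfrac{\sqrt{n}}{c}\sum_{j=1}^i \log(S_j'/S_j) \geq 0$, which in particular is $\geq -1$. On the complementary small-Durfee event, a second application of Lemma~\ref{lem:limit-thm} together with a Chernoff tail bound on $S_{\lfloor n^\gamma\rfloor}$ (whose mean $n^\gamma$ is far below $\sqrt{n}$) shows that $\P(D(\lambda) < \lfloor n^\gamma\rfloor) = O(n^{-1/2+2\gamma}\log^3 n)$, and can be absorbed into the stated error.

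For $r(n)$, I would take two independent uniform partitions $\lambda,\mu$ and apply Lemma~\ref{lem:limit-thm} to the rows of each, identifying $\lambda$'s rows with $S_j'$ and $\mu$'s rows with $S_j$ so that the derived inequality has the sign required by the statement. Dominance $\sum_{j=1}^i \lambda_j \leq \sum_{j=1}^i \mu_j$, transported and processed in the same way, becomes $\sum_{j=1}^i \log(S_j'/S_j) \geq -O(i/\sqrt{n})$, which for $i \leq \lfloor n^\gamma\rfloor$ with $\gamma < 1/4$ is much stronger than $\geq -1$. Two independent applications of Lemma~\ref{lem:limit-thm}, one per partition, together contribute a TV error of the same order $O(n^{-1/2+2\gamma}\log^3 n)$, which completes the bound.

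The only obstacle is bookkeeping: the $O(i)$ ceiling slack is of the same order as the ``$+i$'' gap in EG, so one has to trace through carefully to verify that the resulting walk inequality still has the correct sign, and one must choose the identifications of $\lambda,\mu$ (resp.\ rows/columns) with $(S_j),(S_j')$ so that both cases collapse to the single inequality $\sum_{j=1}^i \log(S_j'/S_j) \geq -1$. The replacement of the derivable ``$\geq 0$'' by the slightly weaker ``$\geq -1$'' is a harmless relaxation that presumably leaves convenient room for the random-walk analysis carried out later in the section.
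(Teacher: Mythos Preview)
Your proposal is correct and takes essentially the same route as the paper: apply Lemma~\ref{lem:limit-thm} to transport the Erd\H{o}s--Gallai and dominance inequalities to the exponential-walk model, then strip the ceilings via $x\le\lceil x\rceil\le x+1$ to land on $\sum_{j\le i}\log(S_j'/S_j)\ge 0$ (for $p(n)$) and $\ge -ci/\sqrt n\ge -1$ (for $r(n)$). Your explicit split on $\{D(\lambda)\ge\lfloor n^\gamma\rfloor\}$ is actually a point of care the paper elides---the Erd\H{o}s--Gallai inequality is only asserted for $i\le D(\lambda)$, and as your example-free argument shows, the small-Durfee event has probability dominated by the TV error anyway.
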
 
\begin{proof}
	Lemmas~\ref{lem:graphical} and~\ref{lem:limit-thm} imply \begin{align*}p(n) &\leq \P\left( \bigcap_{i = 1}^{\lfloor n^\gamma \rfloor}\left\{\sum_{j = 1}^i \left\lceil \frac{n^{1/2}}{c}\log\frac{n^{1/2}/c}{S_j} \right\rceil \geq \sum_{j = 1}^i \left\lceil \frac{n^{1/2}}{c} \log \frac{n^{1/2}/c}{S_j'}\right\rceil + i\right\} \right)  \\
	&\quad+ O\left(n^{-1/2 + 2\gamma}\log^3(n)\right)\,,
	\end{align*}
	so the inequalities $ x \leq \lceil x \rceil \leq x + 1$ imply $$p(n) \leq \P\left(\bigcap_{i = 1}^{\lfloor n^\gamma\rfloor}\left\{\sum_{j = 1}^i \log\left(\frac{S_j'}{S_j}\right) \geq 0\right\} \right) + O\left(n^{-1/2 + 2 \gamma} \log^3(n)\right)\,.$$
	Similarly, Lemma \ref{lem:limit-thm} implies \begin{align*}
	r(n) &\leq \P\left( \bigcap_{i = 1}^{\lfloor n^\gamma\rfloor}\left\{\sum_{j = 1}^i \left\lceil \frac{n^{1/2}}{c}\log\frac{n^{1/2}/c}{S_j} \right\rceil \geq \sum_{j = 1}^i \left\lceil \frac{n^{1/2}}{c} \log \frac{n^{1/2}/c}{S_j'}\right\rceil\right\} \right) + O\left(n^{-1/2 + 2\gamma}\log^3(n)\right) \\ 
	&\leq \P\left( \bigcap_{i = 1}^{\lfloor n^\gamma\rfloor}\left\{ \sum_{j=1}^i \log\left(\frac{S_j'}{S_j} \right) \geq -1 \right\} \right)+ O\left(n^{-1/2 + 2\gamma}\log^3(n)\right)\,.
	\end{align*}
\end{proof}
 
From here, Proposition \ref{prop:uppersum} will be proven by dealing with $\log(S_j'/S_j)$ in two regimes: for small values of $j$, we will use a Chernoff type-bound to show that these terms are only polynomially large; for large values of $j$, the law of large numbers behavior kicks in, and so $S_j'/S_j$ will be close to $1$, implying that we may use the approximation $\log(1 + x) \approx x$.  

We first bound the summands appearing in Lemma~\ref{lem:firststep} with small index.

\begin{lemma}
\label{lem:union1}
For $\delta > 0$ and $n$ sufficiently large we have
$$\P\left( \bigcup_{j = 1}^{\lceil \log^3(n) \rceil} \left\{\frac{S_j'}{S_j} \geq 1 + n^{\delta/2}/\sqrt{j} \right\}\right) \leq 8 n^{-\delta/2}\,. $$
\end{lemma}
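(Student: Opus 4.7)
The plan is to union-bound over $j \in \{1, \ldots, \lceil \log^3 n\rceil\}$ and control each $\P(S_j'/S_j \geq 1+t_j)$ separately, where $t_j := n^{\delta/2}/\sqrt{j}$. The main observation is that the $j=1$ term already contributes $\Theta(n^{-\delta/2})$, since $S_1'/S_1$ is the ratio of two independent $\mathrm{Exp}(1)$ variables and hence has a polynomial Pareto-type tail (in fact $\P(S_1'/S_1 \geq 1+u) = 1/(2+u)$ exactly); this dictates the overall polynomial rate, and it will suffice to check that the $j \geq 2$ terms do not spoil it.

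Concretely, I would apply a Chernoff-type bound via moment generating functions. Since $S_j$ and $S_j'$ are independent $\mathrm{Gamma}(j,1)$ variables, for $\lambda \in (0,1)$,
\[
\E\bigl[e^{\lambda(S_j' - (1+t)S_j)}\bigr] = (1-\lambda)^{-j}\bigl(1+(1+t)\lambda\bigr)^{-j},
\]
and optimizing at $\lambda^\star = t/(2(1+t))$ yields
\[
\P(S_j'/S_j \geq 1+t) \;\leq\; \left(\frac{4(1+t)}{(2+t)^2}\right)^j \;\leq\; \left(\frac{4}{2+t}\right)^j.
\]
(An equivalent bound can be obtained from the fact that $S_j/(S_j+S_j') \sim \mathrm{Beta}(j,j)$, combined with the Beta--Binomial CDF identity and the elementary Markov inequality $\P(\mathrm{Bin}(n,p) \geq k) \leq \binom{n}{k}p^k$.) It is important to keep the bound in the multiplicative form above rather than weakening it to $\exp(-jt^2/(2+t)^2)$ via $1-x \leq e^{-x}$: the latter collapses to $e^{-O(1)}$ at $j=1$ and would destroy the required polynomial decay.

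To finish, I would substitute $t = t_j$ and split the union bound at $j=1$ and $j \geq 2$. The $j=1$ term gives $4/(2+n^{\delta/2}) \leq 4 n^{-\delta/2}$. For $j \geq 2$, using $2+t_j \geq n^{\delta/2}/\sqrt{j}$ gives $\P(S_j'/S_j \geq 1+t_j) \leq (4\sqrt{j}/n^{\delta/2})^j$. Once $n$ is large enough that $4\sqrt{\lceil \log^3 n\rceil}/n^{\delta/2} \leq 1$, the base of this power is at most one throughout the summation range, so the cruder quadratic bound $(4\sqrt{j}/n^{\delta/2})^j \leq 16 j/n^\delta$ applies. Summing yields $O\bigl(\log^6(n)\, n^{-\delta}\bigr) = o(n^{-\delta/2})$, and combined with the $j=1$ contribution the total is at most $4 n^{-\delta/2} + o(n^{-\delta/2}) \leq 8 n^{-\delta/2}$ for $n$ sufficiently large. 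The only mild subtlety is recognizing which form of the Chernoff bound to use; the rest is elementary bookkeeping.
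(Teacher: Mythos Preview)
Your proof is correct and follows essentially the same approach as the paper: the same Chernoff/MGF bound $\E[e^{\lambda(S_j'-(1+t)S_j)}]=(1-\lambda)^{-j}(1+(1+t)\lambda)^{-j}$ optimized at $\lambda=t/(2(1+t))$, yielding the identical per-term bound $(4\sqrt{j}/n^{\delta/2})^j$. The only cosmetic difference is in summing the $a_j$: the paper observes $a_{j+1}/a_j\le 1/2$ and bounds by a geometric series to get $\sum a_j\le 2a_1=8n^{-\delta/2}$, whereas you separate $j=1$ and use the cruder $a_j\le 16j/n^{\delta}$ for $j\ge 2$ to get $4n^{-\delta/2}+o(n^{-\delta/2})$; both are routine.
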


\begin{proof}
For any $\beta > 0$ and $z \in (0,1)$ we have
\begin{align*}
\P\left(\frac{S_j'}{S_j} \geq \beta \right) 
&= \P(e^{z(S_j' - \beta S_j)} \geq 1)  \leq \E \left[e^{z(S_j' - \beta S_j)}\right] = (1-z)^{-j}(1+z\beta)^{-j}\,.
\end{align*}
Choosing $z = (\beta - 1)/(2\beta)$ then yields the bound 
$$\P(S_j'/S_j \geq \beta) \leq \left(1 + \frac{(\beta - 1)^2}{4\beta} \right)^{-j}\,,$$
and substituting $\beta = 1 + n^{\delta/2}/\sqrt{j}$ gives
$$\P\left(S_j'/S_j \geq 1 + n^{\delta/2}/\sqrt{j}\right) \leq \left(1 + \frac{n^{\delta}}{4 j  (1 + n^{\delta/2} /\sqrt{j})} \right)^{-j} \leq \left(\frac{4\sqrt{j}}{n^{\delta/2}} \right)^j\,.$$
If $a_j := (4\sqrt{j}/n^{\delta/2})^j$ then 
$$\sup_{1\leq j \leq \lceil \log^3(n) \rceil} \frac{a_{j+1}}{a_j} = \sup_{1\leq j \leq \lceil \log^3(n) \rceil} 4 \left(\frac{j+1}{j} \right)^{j/2} \frac{\sqrt{j+1}}{n^{\delta/2}} \leq \frac{1}{2}$$ 
for $n$ sufficiently large.  This implies 
\begin{align*}
\sum_{j = 1}^{\lceil \log^3(n) \rceil}\P\left(\frac{S_j'}{S_j} \geq 1 + n^{\delta/2}/\sqrt{j} \right) 
\leq a_1\left(1+1/2+1/4+\cdots\right) = 8 n^{-\delta/2}~,
\end{align*}
and Lemma \ref{lem:union1} follows by the union bound.
\end{proof}

Now, we bound the shift quantities $R_j$ when $j$ is bounded away from the origin.

\begin{lemma}
\label{lem:boundonRj}
For all $j > \log^2(n)$,
$$\P\left(\left|\frac{R_j}{j}\right| \geq \frac{\log(n)}{\sqrt{j}}\right) < 2\exp\left(-\log^2(n)/6\right)\,.$$
\end{lemma}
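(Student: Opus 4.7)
The plan is a standard two-sided Chernoff (Cramér) bound for the centered sum of $j$ independent Exp($1$) random variables, $R_j = \sum_{i=1}^j (X_i - 1)$. Writing $a := \sqrt{j}\log(n)$, I want to show $\P(R_j \geq a) \leq \exp(-\log^2(n)/6)$ and $\P(R_j \leq -a) \leq \exp(-\log^2(n)/6)$, and then combine by a union bound. Since each $X_i$ has moment generating function $\E[e^{tX_i}] = 1/(1-t)$ for $t<1$, the shifted variable $R_j$ has MGF $e^{-tj}(1-t)^{-j}$ on the same range, which is what the bound will exploit.

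For the upper tail, I would apply the Markov inequality $\P(R_j \geq a) \leq e^{-ta}\E[e^{tR_j}] = (e^{-t}(1-t)^{-1})^j e^{-ta}$ and optimize in $t \in (0,1)$. The optimal choice is $t = a/(j+a)$, giving the clean bound $\P(R_j \geq a) \leq e^{-a}(1+a/j)^j$. Taking logarithms and using the Taylor expansion
\[
j\log(1+a/j) \leq a - \frac{a^2}{2j} + \frac{a^3}{3j^2},
\]
the leading $a$'s cancel and I am left with $-a^2/(2j) + a^3/(3j^2) = -\tfrac{1}{2}\log^2(n) + \tfrac{1}{3}\log^3(n)/\sqrt{j}$. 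The hypothesis $j > \log^2(n)$ gives $\log(n)/\sqrt{j} < 1$, so the cubic correction is at most $\tfrac{1}{3}\log^2(n)$ and the combined exponent is at most $-\log^2(n)/6$. This is essentially the only delicate step: the threshold $j > \log^2(n)$ is precisely what keeps the cubic error bounded by one-third of the quadratic main term, producing the ratio $1/6 = 1/2 - 1/3$ that appears in the stated constant.

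For the lower tail, the analogous Markov argument with $\E[e^{-tR_j}] = e^{tj}(1+t)^{-j}$ and the optimal choice $t = a/(j-a)$ (valid because $a < j$ under our hypothesis) yields $\P(R_j \leq -a) \leq e^{a}(1-a/j)^j$. Since $\log(1-x) \leq -x - x^2/2$ for $x \in (0,1)$, the cubic and higher terms go the right way, so this time I get directly $\P(R_j \leq -a) \leq \exp(-a^2/(2j)) = \exp(-\log^2(n)/2)$, which is even stronger than needed. Adding the two tail bounds produces the claimed factor of $2$, completing the proof.

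The only real obstacle is bookkeeping in the upper-tail Taylor estimate; everything else is a routine optimization of an exponential MGF. No results beyond the elementary Chernoff method are required, and the Taylor inequality $\log(1+x) \leq x - x^2/2 + x^3/3$ for $x \in (0,1)$ can be verified by a one-line derivative argument.
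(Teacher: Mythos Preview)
Your proposal is correct and follows essentially the same route as the paper. The paper states the optimized Chernoff bound $\P(|R_j/j|\geq\delta)\leq e^{-j(\delta-\log(1+\delta))}+e^{j(\delta+\log(1-\delta))}$ directly and then applies the inequality $\log(1+x)<x-x^2/2+x^3/3$ with $\delta=\log(n)/\sqrt{j}$, which is exactly your argument after the substitution $a=j\delta$; the role of the hypothesis $j>\log^2(n)$ (ensuring $\delta<1$ so that the cubic term is dominated) is identical in both.
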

\begin{proof}
An application of Chernoff bounds yields that, for every~$\delta \in (0,1)$,
\begin{equation}\label{chernoff}
\P\left(\left|\frac{R_j}{j}\right| \geq \delta\right) \leq e^{-j(\delta - \log(1+\delta))} + e^{j(\delta + \log(1-\delta))},
\end{equation}
and the inequality $\log(1+x) < x - x^2/2 + x^3/3$ for all $x > -1$ implies
$$\delta - \log(1+\delta) > \frac{\delta^2}{2} - \frac{\delta^3}{3} > \frac{\delta^2}{6}\quad\text{and}\quad \delta + \log(1-\delta) < -\frac{\delta^2}{2} - \frac{\delta^3}{3} < -\frac{\delta^2}{2}~.$$ 
Hence, \eqref{chernoff} gives
\begin{equation}\label{simp1}
\P\left(\left|\frac{R_j}{j}\right| \geq \delta\right) < 2 e^{-j\delta^2/6}~.
\end{equation}
Taking $\delta = \log(n)/\sqrt{j}$ completes the Lemma.
\end{proof}

Lemma~\ref{lem:boundonRj} allows us to bound, with high probability, summands in Lemma~\ref{lem:union1} by a scaled difference of $R_j'$ and $R_j$.

\begin{lemma}
\label{lem:residual}
With probability at least $1 - 4 n^{1/4} \exp\left(-\log(n)^2/6 \right)$ we have 
$$\sum_{j = \lceil\log^3(n)\rceil+1}^{\ell} \log\left(\frac{S_j'}{S_j}\right) \leq \sum_{j = \lceil\log^3(n)\rceil+1}^{\ell}\frac{R_j' - R_j}{j} + \log^3(n)$$ 
for all $\lceil \log^3(n)\rceil <  \ell \leq \lfloor n^\gamma\rfloor$.
\end{lemma}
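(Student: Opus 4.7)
The plan is to pass from $\log(S_j'/S_j)$ to $(R_j'-R_j)/j$ term-by-term via two ingredients: the elementary bound $\log(1+x)\leq x$, and a first-order replacement of $1/S_j$ by $1/j$ controlled on a high-probability event where Lemma~\ref{lem:boundonRj} is simultaneously effective for every relevant index $j$ and for both random walks $R_j$ and $R_j'$.

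First I would define the good event
$$E := \bigcap_{\lceil \log^3(n)\rceil < j \leq \lfloor n^\gamma \rfloor} \Bigl(\{|R_j| \leq \sqrt{j}\log(n)\} \cap \{|R_j'| \leq \sqrt{j}\log(n)\}\Bigr).$$
Applying Lemma~\ref{lem:boundonRj} to each of $R_j$ and $R_j'$ contributes at most $2\exp(-\log^2(n)/6)$ to the failure probability per index $j$, and there are at most $n^\gamma \leq n^{1/4}$ such indices, so a union bound yields $\P(E^c) \leq 4n^{1/4}\exp(-\log^2(n)/6)$, matching the probability in the statement. Since $E$ does not depend on $\ell$, a per-index estimate on $E$ will automatically deliver the claim for all $\ell$ in the specified range.

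On $E$, the assumption $j > \log^3(n)$ forces $|R_j|/j \leq 1/\sqrt{\log(n)}$, hence $S_j = j+R_j \geq j/2$ for $n$ large. Since $S_j' > 0$ always, the inequality $\log(1+x)\leq x$ applies with $x = (R_j'-R_j)/S_j$, and rewriting
$$\frac{R_j'-R_j}{S_j} = \frac{R_j'-R_j}{j} - \frac{(R_j'-R_j)R_j}{jS_j}$$
I would bound the residual on $E$ by
$$\left|\frac{(R_j'-R_j)R_j}{jS_j}\right| \leq \frac{2(|R_j'|+|R_j|)|R_j|}{j^2} \leq \frac{4\log^2(n)}{j}.$$
Summing over $\lceil \log^3(n)\rceil < j \leq \ell$ via the harmonic estimate $\sum_{j\leq n^\gamma} 1/j \leq \gamma\log(n) + O(1)$ converts this into an additive error of at most $4\gamma\log^3(n)$, which is $\leq \log^3(n)$ since $\gamma < 1/4$. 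The main thing to watch is this last constant: with $4\gamma$ allowed to approach $1$, the per-term residual bound leaves little room to spare, so the estimate $S_j \geq j/2$ needs to be kept tight and no additional constant factors can be wasted in the decomposition; every other step is routine.
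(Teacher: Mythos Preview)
Your proposal is correct and follows essentially the same route as the paper: both define the good event via Lemma~\ref{lem:boundonRj} applied to $R_j$ and $R_j'$ for every index in the range, union bound over at most $n^\gamma\leq n^{1/4}$ indices, apply $\log(1+x)\leq x$ with $x=(R_j'-R_j)/S_j$, use the identical algebraic decomposition $(R_j'-R_j)/S_j=(R_j'-R_j)/j-(R_j'-R_j)R_j/(jS_j)$, bound the residual by $4\log^2(n)/j$, and sum harmonically to get $4\gamma\log^3(n)+O(\log^2 n)\leq\log^3(n)$ from $\gamma<1/4$. Your observation that the constant is tight (with $4\gamma$ approaching $1$) is exactly the point, and the paper handles it the same way.
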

\begin{proof}
	
Using the inequality $\log(1 + x) \leq x$ for all $x > -1$, we have
{\small
\begin{equation}\label{singlebound}
\log\left(\frac{S_j'}{S_j}\right) = \log\left(1+\frac{R_j'-R_j}{j+R_j}\right) 
	\leq \frac{R_j'-R_j}{j+R_j}
	= \frac{R_j' - R_j}{j} - \frac{\left(\frac{R_j'}{j} - \frac{R_j}{j}\right)\frac{R_j}{j}}{\left(1 + \frac{R_j}{j}\right)}\,.
\end{equation}
}
Now, for every $n \geq e^4$ and every fixed $j \geq \log^3(n)$ we have, by Lemma \ref{lem:boundonRj},
\begin{equation}\label{lastbound}
\left|\frac{\left(\frac{R_j'}{j} - \frac{R_j}{j}\right)\frac{R_j}{j}}{\left(1 + \frac{R_j}{j}\right)}\right| \leq \frac{2\log^2 (n)/j}{\left|1+R_j/j\right|} \leq \frac{4\log^2(n)}{j}~
\end{equation} 
with probability at least $1-4\exp\left(-\log^2(n)/6\right)$. It follows from equations~\eqref{singlebound} and~\eqref{lastbound} and a union bound that, for all $n\geq e^4$ and for all $\ell$ in the interval $\left(\lceil\log^3(n)\rceil, \lfloor n^{\gamma}\rfloor\right]$ that
\begin{align*}
\sum_{j = \lceil\log^3(n)\rceil+1}^{\ell} \log\left(\frac{S_j'}{S_j}\right) &\leq \sum_{j = \lceil\log^3(n)\rceil+1}^{\ell}\frac{R_j' - R_j}{j} + 4\log^2(n) \sum_{j=2}^{\lfloor n^\gamma \rfloor} \frac{1}{j}\\
&\leq \sum_{j = \lceil\log^3(n)\rceil+1}^{\ell}\frac{R_j' - R_j}{j} + \log^3(n)\,,
\end{align*}
with probability at least  $1 - 4 n^{1/4} \exp\left(-\log(n)^2/3 \right)$. 
\end{proof}

We are now ready to complete the proof of Proposition~\ref{prop:uppersum}.

\begin{proof}[Proof of Proposition~\ref{prop:uppersum}]
Define the three events
\begin{align*}
E_1 &:= \bigcup_{j = 1}^{\lceil \log^3(n) \rceil} \left\{\frac{S_j'}{S_j} \geq 1+ \frac{n^{\delta/2}}{\sqrt{j}} \right\}~, \\[+2mm]
E_2 &:= \bigcup_{\ell=\lceil\log^3(n)\rceil+1}^{\lfloor n^{\gamma}\rfloor} \left\{\sum_{j = \lceil\log^3(n)\rceil+1}^{\ell} \log\left(\frac{S_j'}{S_j}\right)> \sum_{j = \lceil\log^3(n)\rceil+1}^{\ell}\frac{R_j' - R_j}{j} + \log^3(n)\right\}~, \\[+2mm]
E_3 &:= \bigcap_{i=1}^{ \lfloor n^\gamma\rfloor}\left\{\sum_{j = 1}^i \log\left(\frac{S_j'}{S_j}\right) \geq -1\right\}~.
\end{align*} 
By Lemmas \ref{lem:union1} and \ref{lem:residual}, we have $\P(E_1) \leq 8n^{-\delta/2}$ and $\P(E_2) \leq 4 n^{1/4} \exp\left(-\log(n)^2/6 \right)$. Furthermore, on the complement $E_1^c$ we have
\begin{align}\label{onE1c}
\sum_{j = 1}^{\lceil \log^3(n) \rceil} \log\left(\frac{S_j'}{S_j}\right) 
&\leq \sum_{j = 1}^{\lceil \log^3(n) \rceil} \log\left(1+ \frac{n^{\delta/2}}{\sqrt{j}}\right)\nonumber\\ 
&\leq n^{\delta/2}\sum_{j = 1}^{\lceil \log^3(n) \rceil}\frac{1}{\sqrt{j}}\nonumber\\
&\leq n^{\delta/2}\int_0^{\lceil \log^3(n) \rceil} \frac{1}{\sqrt{x}}~dx \nonumber\\
&\leq 2n^{\delta/2}\lceil\log^{3/2}(n)\rceil\,.  
\end{align}
Looking to bound $\P(E_3)$, first note that
\begin{equation}\label{pe3}
\P(E_3) \leq 8n^{-\delta/2} + 4 n^{1/4} \exp\left(-\log(n)^2/6 \right) + \P\left(E_3 \cap E_2^c \cap E_1^c\right)~.
\end{equation}
It follows from \eqref{onE1c}, that
$$E_3 \cap E_1^c \subseteq \bigcap_{i = \lceil \log^3(n) \rceil+1}^{\lfloor n^\gamma \rfloor}\left\{ \sum_{j = \lceil\log^3(n)\rceil+1}^{i} \log\left(\frac{S_j'}{S_j}\right) \geq -3n^{\delta/2}\lceil\log^{3/2}(n)\rceil\right\}~,$$
hence, for all large $n$, we have the containments
\begin{align}
E_3 \cap E_2^c \cap E_1^c &\subseteq \bigcap_{i = \lceil \log^3(n) \rceil+1}^{\lfloor n^\gamma \rfloor} \left\{\sum_{j = \lceil\log^3(n)\rceil+1}^{i}\frac{R_j' - R_j}{j} \geq -3n^{\delta/2}\lceil\log^{3/2}(n)\rceil - \log^3(n)\right\} \nonumber \\
&\subseteq \bigcap_{i = \lceil \log^3(n) \rceil+1}^{\lfloor n^\gamma \rfloor}\left\{\sum_{j = \lceil\log^3(n)\rceil+1}^{i}\frac{R_j' - R_j}{j} \geq -4n^{\delta/2}\lceil\log^{3/2}(n)\rceil \right\}\,. \label{threeint}
\end{align}

From \eqref{pe3} and~\eqref{threeint} and Lemma \ref{lem:firststep} we thus have
\begin{equation}\label{int}
	\max\{p(n),r(n)\} \leq O\left(n^{-1/2 + 2 \gamma} \log^3(n)+n^{-\delta/2}\right) + \P(E)~,
\end{equation}
where $$E := \left\{\min_{\ell \in \left(\lceil\log^3(n)\rceil, \lfloor n^{\gamma}\rfloor\right]}\sum_{j = \lceil \log^3(n)\rceil+1}^\ell \frac{R_j' - R_j}{j} \geq - 4 n^{\delta/2} \lceil \log^{3/2}(n) \rceil\right\}\,.$$
Define another event
$$E_4 := \left\{\min_{1\leq i \leq \lceil \log^3(n)\rceil} \sum_{j=1}^i \frac{R_j'-R_j}{j} \leq -n^{\delta/2}\lceil \log^{3/2}(n) \rceil \right\}, $$
and note that
\begin{equation}\label{sufficient}
E\cap E_4^c \subseteq \left\{\min_{1\leq i \leq \lfloor n^{\gamma}\rfloor}\sum_{j = 1}^i \frac{R_j' - R_j}{j} \geq - 5 n^{\delta/2} \lceil \log^{3/2}(n) \rceil\right\}~.
\end{equation}
In view of \eqref{int} and \eqref{sufficient}, to complete the proof of Proposition~\ref{prop:uppersum} it thus suffices to show that $P(E_4)  = O(n^{-\delta/2})$.  With this in mind, we see
\begin{align*}
	\P(E_4) &\leq \sum_{j=1}^{\lceil \log^3(n)\rceil} \P\left(\left|\frac{R_j'-R_j}{j}\right|\geq \frac{n^{\delta/2}\lceil \log^{3/2}(n) \rceil}{\lceil \log^3(n)\rceil}\right)\\
	&\leq \sum_{j=1}^{\lceil \log^3(n)\rceil} \P\left(\left|\frac{R_j'-R_j}{j}\right|\geq \frac{1}{2}n^{\delta/2}\log^{-3/2}(n)\right)\\
	&\leq 4n^{-\delta}\log^3(n) \sum_{j=1}^{\lceil \log^3(n)\rceil} \frac{\E(R_j'-R_j)^2}{j^2}\\
	&= 8n^{-\delta}\log^3(n) \sum_{j=1}^{\lceil \log^3(n)\rceil} \frac{1}{j}\\ 
	&\leq 8n^{-\delta}\log^3(n)\left(1+\log \lceil \log^3(n)\rceil\right) \\
	&= O(n^{-\delta/2})\,.
	\end{align*}
The proof of Proposition~\ref{prop:uppersum} is now complete.
\end{proof}

\subsection{Comparison to a Brownian Functional}
\label{brownian}
Having established Proposition~\ref{prop:uppersum}, our next step is comparing the random walk $\left\{\sum_{j=1}^\ell \frac{R_j'-R_j}{j}\right\}_{\ell \geq 1}$ to a Gaussian process constructed from a Brownian motion. Towards this, let~$B_t$ be a standard Brownian motion, and define $Z_n = \sum_{k = 1}^{n} B_k/k$ for each non-negative integer $n$.
\begin{prop}\label{prop:exit-time}
Let $\alpha \in [0,\frac{1}{2})$ and let $\beta = 0.01363853235$. Then, as~$N \rightarrow \infty$, we have $$\P\left( \max_{k \leq N}  Z_k \leq N^{\alpha}\right) = O\left(N^{-\beta(1-2\alpha)}\right)\,.$$
\end{prop}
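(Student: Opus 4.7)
The plan is to reduce the small-ball problem to one for a stationary Gaussian process (via embedding the discrete walk in a continuous Gaussian process and using Brownian scaling), and then extract the explicit exponent $\beta$ by analyzing that stationary process.

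First, I would pass from the discrete sum $Z_k = \sum_{j \leq k} B_j/j$ to the continuous Gaussian process $Y_t := \int_0^t B_s/s\, ds$, which satisfies $\Var(Y_t) = 2t$ and $\E[Y_s Y_t] = s(2 + \log(t/s))$ for $s \leq t$. A direct computation gives $\E[Z_m Z_n] = m(2 + H_n - H_m) - H_m$ for $m \leq n$, which differs from the continuous covariance by logarithmic lower-order terms; so $\max_{k \leq N} Z_k$ and $\sup_{t \in [1, N]} Y_t$ agree up to error that is negligible on the target polynomial scale $N^{-\beta(1-2\alpha)}$, and it suffices to bound $\P(\sup_{t \in [1, N]} Y_t \leq N^\alpha)$.

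Second, use the self-similarity $Y_{Nt} \stackrel{d}{=} \sqrt{N}\, Y_t$ (inherited from Brownian scaling of $B$) to recast the target as a small-ball probability: $\P(\sup_{t \in [0,1]} Y_t \leq \epsilon) = O(\epsilon^{2\beta})$ with $\epsilon := N^{\alpha-1/2}$. Then apply the logarithmic time-change $\tau = \log t$: the normalized process $\psi(\tau) := Y_{e^\tau}/e^{\tau/2}$ is a stationary centered Gaussian process with covariance $(2+|t|)\,e^{-|t|/2}$, i.e.\ the Mat\'ern-$3/2$ covariance with length-scale $2$. The event $\sup_{t \leq 1} Y_t \leq \epsilon$ translates, after a time reversal $u = -\tau$, into the persistence event $\{\tilde\psi(u) \leq \epsilon\, e^{u/2}$ for all $u \in [0, L]\}$ where $L = 2\log(1/\epsilon)$ and $\tilde\psi$ is a stationary copy of $\psi$.

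The crux is to bound the persistence probability of $\tilde\psi$ below this exponentially growing barrier by $O(e^{-\beta L}) = O(\epsilon^{2\beta})$. The key structural feature is that the Mat\'ern-$3/2$ process $\tilde\psi$ admits a $2$-dimensional Markov representation via $(\tilde\psi, \tilde\psi')$, so its persistence can be analyzed through the principal eigenvalue of the generator restricted to the region below the (suitably rescaled) barrier. Discretizing at a natural unit scale and iterating sharp Gaussian conditional estimates produces a bound of the right form, with the explicit constant $\beta = 0.01363853235$ emerging as the corresponding spectral/variational quantity associated with the Mat\'ern-$3/2$ kernel.

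The hardest step is the last one: obtaining the precise value of $\beta$. At the natural discretization scale, the near-neighbor correlation of $\tilde\psi$ is already $\approx 0.95$, so any bound that does not carefully exploit the correlation structure is far too weak; in particular, naive Slepian-type comparisons to independent Gaussians are qualitatively wrong here. The correct exponent emerges only after a careful accounting of the conditional Gaussian ratios along the discretization, essentially quantifying the rate at which the conditional measure concentrates on the sub-barrier region at each step.
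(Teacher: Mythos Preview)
Your reduction to the stationary Gaussian process with covariance $(2+|r|)e^{-|r|/2}$ is correct and illuminating---in fact it is exactly the structure the paper is implicitly exploiting. The paper samples $Z$ at the geometric grid $s_i=\lceil N^{2\alpha}\rho^i\rceil$ and computes
\[
\mathrm{Corr}(Z_{s_i},Z_{s_j})\approx \frac{2+(j-i)\log\rho}{2\,\rho^{(j-i)/2}},
\]
which is precisely the Mat\'ern-$3/2$ correlation at lag $(j-i)\log\rho$. So your continuous embedding and log-time change are a conceptual repackaging of the same correlation structure; the approaches diverge only in how the exponent is extracted.

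The genuine gap is your final step. The number $\beta=0.01363853235$ is \emph{not} the persistence exponent of the Mat\'ern-$3/2$ process, nor any intrinsic spectral or variational quantity attached to it. It is an artifact of a specific black-box inequality: the paper applies the Li--Shao anti-concentration bound, which says that if one can find $M$ time points with pairwise correlation sums at most $5/4$ and standard deviations at least half the barrier, then $\P(\max Z\le \text{barrier})\le e^{-M/10}$. One then chooses the geometric ratio $\rho$ as the root $\rho_\ast\approx 1528.69$ of
\[
1+\frac{2\rho^{-1/2}}{1-\rho^{-1/2}}+\frac{\log\rho\,\rho^{-1/2}}{(1-\rho^{-1/2})^2}=\frac{5}{4},
\]
takes $M=\lfloor(1-2\alpha)\log N/\log\rho_\ast\rfloor$, and reads off $\beta=1/(10\log\rho_\ast)$. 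The constants $5/4$ and $1/10$ come from Li--Shao, not from the process, so no amount of ``careful accounting of conditional Gaussian ratios'' or eigenvalue analysis of the Markov generator will reproduce this particular value. Your route could in principle yield a different (possibly better) exponent, but as written the last paragraph is an assertion rather than an argument, and it cannot land on the stated $\beta$. Two smaller points: your passage from $\sup_{[1,N]}Y_t$ to $\sup_{[0,1]}Y_t$ goes the wrong direction for an upper bound (you should stay on $[1/N,1]$ after scaling), and the discrete-to-continuous comparison of $Z_k$ with $Y_t$ needs a quantitative coupling, not just a covariance comparison.
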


Understanding the covariance structure of $X_t$ is necessary for proving Proposition \ref{prop:exit-time}.

\begin{lemma}\label{covariance}
The process $(Z_n)_{n \geq 0}$ is a mean-zero Gaussian process, and for~$0\leq m \leq n$ we have
$$\mathrm{Cov}(Z_m,Z_n) = 2m -(m+1) H_m + m H_n~,$$
where $H_k = \sum_{j =1}^k 1/j$ is the $k$th harmonic number.
\end{lemma}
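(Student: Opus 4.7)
The plan is to exploit the fact that $Z_n$ is a finite linear combination of the Brownian motion values $B_1, \ldots, B_n$. Since any finite collection of Brownian motion values at fixed times is jointly Gaussian with mean zero, the process $(Z_n)_{n \geq 0}$ is automatically a mean-zero Gaussian process. This immediately handles the first claim, and reduces the lemma to a covariance computation.

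For the covariance, I would use bilinearity together with $\mathrm{Cov}(B_i, B_j) = \min(i,j)$ to write, for $m \leq n$,
\begin{equation*}
\mathrm{Cov}(Z_m, Z_n) \;=\; \sum_{i=1}^m \sum_{j=1}^n \frac{\min(i,j)}{ij}\,.
\end{equation*}
For each fixed $i \leq m$, I would split the inner sum at $j = i$: when $j \leq i$ the $1/i$'s cancel and one gets a constant contribution, while for $j > i$ one gets a tail of the harmonic series. Explicitly,
\begin{equation*}
\sum_{j=1}^n \frac{\min(i,j)}{j} \;=\; \sum_{j=1}^{i} 1 \;+\; i \sum_{j=i+1}^n \frac{1}{j} \;=\; i\bigl(1 + H_n - H_i\bigr)\,.
\end{equation*}
Substituting back into the outer sum, the factors of $i$ cancel, leaving
\begin{equation*}
\mathrm{Cov}(Z_m, Z_n) \;=\; \sum_{i=1}^m \bigl(1 + H_n - H_i\bigr) \;=\; m + m H_n - \sum_{i=1}^m H_i\,.
\end{equation*}

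The final step is the standard harmonic-sum identity $\sum_{i=1}^m H_i = (m+1) H_m - m$, which I would verify by interchanging the order of summation: $\sum_{i=1}^m \sum_{j=1}^i 1/j = \sum_{j=1}^m (m-j+1)/j = (m+1)H_m - m$. Plugging this in gives $\mathrm{Cov}(Z_m, Z_n) = 2m - (m+1)H_m + m H_n$, which is the claimed formula. There is no genuine obstacle here: the whole argument is essentially a two-line summation interchange once one recognizes that the Gaussian property is free from the Brownian construction, and the main care required is just in keeping track of which index is smaller when expanding $\min(i,j)$.
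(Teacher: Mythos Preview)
Your proof is correct and follows essentially the same approach as the paper: both start from the double sum $\sum_{i=1}^m\sum_{j=1}^n \min(i,j)/(ij)$ obtained from bilinearity and $\mathrm{Cov}(B_i,B_j)=\min(i,j)$, and evaluate it by elementary summation. The only cosmetic difference is that the paper splits the index set into the square $[1,m]^2$ (handled by symmetry) and the rectangle $[1,m]\times(m,n]$, whereas you fix $i$, evaluate the inner $j$-sum in closed form, and finish with the identity $\sum_{i=1}^m H_i=(m+1)H_m-m$; your route is slightly more streamlined but not substantively different.
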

\begin{proof}
Fixing $0\leq m \leq n$, we have
\begin{align*}
\mathrm{Cov}(Z_m,Z_n) &= \sum_{i=1}^m \sum_{j=1}^n \frac{\min\{i,j\}}{ij}\\
&= \sum_{i=1}^m \sum_{j=1}^m \frac{\min\{i,j\}}{ij} + \sum_{i=1}^m \sum_{j=m+1}^n \frac{1}{j}\\
&= 2 \sum_{1\leq i<j\leq m} \frac{1}{j} + \sum_{i=1}^m \frac{1}{i} + \sum_{i=1}^m (H_n - H_m)\\
&= 2\sum_{j=2}^m \sum_{i=1}^{j-1} \frac{1}{j} - (m-1)H_m + m H_n\\
&= 2 \sum_{j=2}^m \left(1 - \frac{1}{j}\right) - (m-1)H_m + m H_n\\
&= 2(m-1) -2(H_m-1) -(m-1)H_m + mH_n\\
&= 2m - (m+1)H_m + mH_n ~,
\end{align*}
as desired.
\end{proof}

To establish Proposition \ref{prop:exit-time} we also use the following anti-concentration result of Li and Shao for the supremum of a centered Gaussian process.

\begin{lemma}[{Li and Shao~\cite[Theorem~2.2]{LiShao2004}}]\label{lem:Li-Shao} 
Let $Y_t$ be a mean-zero Gaussian process on $[0,T]$ with $Y_0 = 0$.  For $x > 0$ and $j = 1,\ldots,M$ let $s_j \in [0,T]$ be a sequence such that, for each $i$,
\begin{equation} 
\label{eq:Corr}
\sum_{j = 1}^M \left|\mathrm{Corr}(Y_{s_i},Y_{s_j}) \right| \leq 5/4
\end{equation}
and \begin{equation}
\left(\E Y_{s_i}^2 \right)^{1/2} \geq x/2.
\end{equation}
Then $$\P\left( \max_{t \in [0,T]} Y_t \leq x\right) \leq e^{-M/10}\,.$$
\end{lemma}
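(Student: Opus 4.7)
The plan is to reduce to the discrete grid $\{s_1, \dots, s_M\}$ and then exploit the weak dependence condition via a second moment plus Chernoff argument.

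First, since every $s_j \in [0,T]$ we have the inclusion $\{\max_{t \in [0,T]} Y_t \leq x\} \subseteq \bigcap_{j=1}^M \{Y_{s_j} \leq x\}$, so it suffices to bound the probability of the right-hand event. Write $\sigma_j := (\E Y_{s_j}^2)^{1/2}$ and $G_j := Y_{s_j}/\sigma_j$: each $G_j$ is a standard Gaussian with $\mathrm{Corr}(G_i,G_j) = \rho_{ij}$, and the hypothesis $\sigma_j \geq x/2$ gives $x/\sigma_j \leq 2$.

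Next, introduce the Bernoulli indicators $\xi_j := \one\{G_j > x/\sigma_j\}$ and their sum $S := \sum_{j=1}^M \xi_j$, so the event of interest is $\{S = 0\}$. One has $p_j := \E \xi_j = 1 - \Phi(x/\sigma_j) \geq 1 - \Phi(2) =: p_0 > 0$, hence $\E S \geq p_0 M$. For the second moment, a classical bivariate-Gaussian estimate (for instance via Mehler's expansion of the joint density, or by differentiating $\P(G_i > a, G_j > b)$ with respect to $\rho_{ij}$) yields the sharp bound
\[ |\mathrm{Cov}(\xi_i, \xi_j)| \leq C_0\,|\rho_{ij}|, \qquad i \neq j, \]
for an explicit constant $C_0$. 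Combined with the row-sum hypothesis $\sum_{j \neq i} |\rho_{ij}| \leq 1/4$ this gives the linear variance bound $\Var(S) \leq p_0(1-p_0)M + C_0 M/4 = O(M)$.

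Finally, to upgrade variance control to an exponential bound, apply the Chernoff inequality $\P(S = 0) \leq \E e^{-\lambda S}$ for a suitable $\lambda > 0$, and control the exponential moment by an estimate of the shape
\[ \E e^{-\lambda S} \leq \exp\bigl(-\lambda\,\E S + C_1 \lambda^2 M\bigr), \]
valid for small $\lambda$, where the quadratic-in-$\lambda$ term absorbs the pairwise covariance contribution via the row-sum hypothesis. Optimizing in $\lambda$ produces $\P(S=0) \leq e^{-cM}$ for an explicit $c > 0$; the specific value $c = 1/10$ then follows from tracking the numerical constants $\Phi(2)$, $C_0$, and the factor $5/4$ in the hypothesis.

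The main obstacle is the last step: the linear variance bound alone yields only polynomial decay of $\P(S = 0)$ via Paley--Zygmund, so reaching exponential decay requires going beyond the second moment. Either a direct Gaussian-density computation of $\E e^{-\lambda S}$, or a decoupling/clustering argument that groups the $\xi_j$'s into near-independent blocks using the row-sum bound on $|\rho_{ij}|$, will be needed. Pinning down the precise constant $1/10$ is the technically delicate part, requiring fine control of all intermediate constants rather than any new conceptual input.
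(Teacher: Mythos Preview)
This lemma is quoted without proof in the paper---it is Theorem~2.2 of Li and Shao, so there is no in-paper argument to compare against. Evaluating your proposal on its own merits, the reduction to the grid and the first- and second-moment computations for the indicators $\xi_j$ are fine, but the proof has a genuine gap at exactly the point you flag.

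The inequality you assert,
\[
\E e^{-\lambda S} \leq \exp\bigl(-\lambda\,\E S + C_1 \lambda^2 M\bigr),
\]
is not a consequence of the variance bound $\Var(S) = O(M)$: a sub-Gaussian moment generating function estimate of this type requires control of \emph{all} joint moments of the $\xi_j$, not just pairwise covariances. For independent indicators it follows from factorization, but under only a row-sum bound $\sum_{j\neq i}|\rho_{ij}| \leq 1/4$ on the underlying Gaussian correlations there is no off-the-shelf result that delivers it. Your two suggested fixes---direct computation of $\E e^{-\lambda S}$ or a decoupling into near-independent blocks---are each substantial undertakings, and neither is sketched. In particular, the blocking idea would need the correlation matrix to have small-bandwidth or sparse structure, which the row-sum hypothesis does not provide (the $1/4$ mass could be spread over all $M-1$ off-diagonal entries). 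As written, the proposal establishes only $\P(S=0) = O(1/M)$ via Paley--Zygmund, far short of $e^{-M/10}$.

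Li and Shao's own argument does not pass through Bernoulli indicators at all. They work directly with the Gaussian vector $(G_1,\dots,G_M)$ and exploit the fact that the row-sum condition makes the correlation matrix diagonally dominant, hence close to the identity; a density comparison against the i.i.d.\ case then shows $\P(\max_j G_j \leq 2)$ is at most a constant multiple of $\Phi(2)^M$, which immediately gives exponential decay with an explicit rate. If you want to repair your approach, the cleanest route is to abandon the Chernoff step and instead bound the joint density of $(G_1,\dots,G_M)$ on the orthant $\{g_j \leq 2\}$ relative to the product density, using $\|\Sigma - I\|_{\ell^1 \to \ell^1} \leq 1/4$.
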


We are now ready to prove Proposition \ref{prop:exit-time}.
\begin{proof}[Proof of Proposition \ref{prop:exit-time}] 
Aiming to use Lemma \ref{lem:Li-Shao}, let $s_i := \lceil N^{2\alpha} \rho^i\rceil$ where $\rho > 1$ is a constant to be determined later.  Note that for $N$ large we have $$\left(\E Z_{s_i}^2\right)^{1/2} = (2s_i - H_{s_i})^{1/2} \geq s_i^{1/2} \geq N^{\alpha}\,.$$
For each $\eps > 0$ and $i < j$ we have for sufficiently large $N$ (depending on $\varepsilon$)
\begin{align*}
|\mathrm{Corr}(Z_{s_i},Z_{s_j})| &\leq \frac{\left(2 + H_{s_j} - H_{s_i} \right)s_i}{(2-\eps)\sqrt{s_is_j}} \\
&\leq \frac{\left(2+\varepsilon + (j-i)\log(\rho) \right)}{(2 - \eps)\rho^{(j-i)/2}}\,.
\end{align*}
 Hence, for every $M \geq 1$, we have for sufficiently large $N$ that
\begin{align*}
\sum_{j = 1}^M |\mathrm{Corr}(Z_{s_i},Z_{s_j})| &\leq 1 + 2\sum_{k \geq 1} \frac{\left(2 + \eps + k\log(\rho) \right)}{(2 - \eps)\rho^{k/2}} \,.
\end{align*}
Now, note that 
$$
\frac{2 + \eps}{2 - \eps}\sum_{k \geq 1} \rho^{-k/2} + \frac{\log(\rho)}{2 - \eps}\sum_{k \geq 1}k \rho^{-k/2} 
\leq (1 + O(\eps))\left(\frac{\rho^{-1/2}}{1 - \rho^{-1/2}} + \frac{\log(\rho)}{2} \frac{\rho^{-1/2}}{(1 - \rho^{-1/2})^2} \right)\,.
$$
Choose $\varepsilon > 0$ small enough so that
\begin{equation}\label{sumcorrbound}
\sum_{j = 1}^M |\mathrm{Corr}(Z_{s_i},Z_{s_j})| \leq g(\rho)~,
\end{equation}
where $$g(\rho) :=  1+2 \left(\frac{\rho^{-1/2}}{1 - \rho^{-1/2}} + \frac{\log(\rho)}{2} \frac{\rho^{-1/2}}{(1 - \rho^{-1/2})^2} \right)~.$$
Now, $g(\rho_\ast) = 5/4$ for some $\rho_\ast \approx 1528.691213$, and setting $\rho = \rho_\ast$ yields 
$$\sum_{j = 1}^M |\mathrm{Corr}(Z_{s_i},Z_{s_j})| \leq (1 + O(\eps))\frac{5}{4}\,.$$  
This implies that for $\rho > \rho_\ast$, equation~$\eqref{eq:Corr}$ is satisfied for some $\eps > 0$ for $N$ sufficiently large.  In particular, if we set $\rho = \rho_\ast^{1 + \eps}$ for $\eps > 0$, then equation~\eqref{eq:Corr} is satisfied for $N$ sufficiently large.  Set $M := \lfloor(1 - 2\alpha) \log(N)/\log(\rho) \rfloor$, so that $s_M \leq N$, and hence, $s_1,\ldots,s_M \in [0,N]$. By Lemma \ref{lem:Li-Shao}, we have
\begin{align*}
	\P\left(\max_{t\in[0,N]} Z_t \leq N^{\alpha}\right) &\leq \exp \left(-\frac{1}{10}\left\lfloor\frac{(1 - 2\alpha) \log(N)}{\log(\rho)} \right\rfloor\right)\\ &\leq e^{0.1} N^{-\beta
		(1-2\alpha)}~,
	\end{align*}
where $\beta = 0.01363853235$ for some choice of $\eps > 0$ sufficiently small.
\end{proof}

With Proposition \ref{prop:exit-time} established, all that remains is a comparison between the random walk $\{T_j/\sqrt{2} \}_{j \geq 1}$ where $T_j := R_j'-R_j = \sum_{i = 1}^j (X_i' - X_i)$ and a standard Brownian motion~$B$.  The Kolm\'os-Major-Tusn\'ady (KMT) coupling provides a way to compare simple random walk to Brownian motion \cite{KMT}.  We use a slight modification that applies directly to the case at hand:

\begin{lemma}[{Aurzada and Dereich~\cite{aurzada2013universality}}]
\label{lem:KMT}
There exist constants $\beta_1,\beta_2>0$ such that for each $T \geq e$ there is a coupling of $T_t/\sqrt{2}$ and $B_t$, where
$$\P\left(\sup_{t \in [0,T]} \left|\frac{T_t}{\sqrt{2}} - B_t\right| \geq a \right) \leq e^{-\beta_1 a} T^{\beta_2}$$
for each $a >0$.
\end{lemma}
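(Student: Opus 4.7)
The plan is to deduce this coupling inequality from the classical Komlós–Major–Tusnády strong approximation theorem applied to the rescaled increments $Y_i := (X_i' - X_i)/\sqrt{2}$. First I would verify the hypotheses of KMT: the $Y_i$ are i.i.d., have mean zero and variance one, and admit a neighborhood of the origin on which the moment generating function is finite. Indeed, since $X_i$ and $X_i'$ are independent unit exponentials, $X_i' - X_i$ has a standard Laplace distribution, so $\E[e^{\lambda Y_i}] = (1 - \lambda^2/2)^{-1}$ for $|\lambda| < \sqrt{2}$. Under these assumptions, KMT furnishes a coupling of the partial-sum process $S_n := Y_1 + \cdots + Y_n = T_n/\sqrt{2}$ with a standard Brownian motion $B$ satisfying
\[
\P\!\left(\max_{k \leq N}\,|S_k - B_k| \geq C_1 \log N + u\right) \leq C_2 e^{-C_3 u}
\]
for every integer $N \geq 1$ and every $u > 0$, where $C_1,C_2,C_3 > 0$ depend only on the common distribution of the $Y_i$ (hence are absolute constants here).

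The second step is a purely algebraic rearrangement into the target form. Writing $u = a - C_1 \log N$ transforms the KMT inequality into
\[
\P\!\left(\max_{k \leq N}\,|S_k - B_k| \geq a\right) \leq C_2\, N^{C_1 C_3}\, e^{-C_3 a},
\]
which is nontrivial when $a \geq C_1 \log N$ and otherwise absorbed into the polynomial factor. Setting $N = \lceil T \rceil$ controls the discrepancy at integer times.

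To upgrade from integer times to the supremum over $[0, T]$, I would take the natural convention $T_t := T_{\lfloor t \rfloor}$ for non-integer $t$, so that the process $T_\cdot/\sqrt{2}$ is piecewise constant, and decompose
\[
\sup_{t \in [0, T]}\left|\frac{T_t}{\sqrt{2}} - B_t\right| \;\leq\; \max_{k \leq N}|S_k - B_k| \;+\; \max_{k < N}\,\sup_{s \in [k, k+1]}|B_s - B_k|.
\]
The second term is the maximal unit-interval oscillation of Brownian motion, whose distribution has Gaussian tails; a union bound over the $N$ intervals gives a contribution of order $N\,e^{-\Omega(a^2)}$, dominated by the KMT term. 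Collecting constants and using $T \geq e$ so that $\log T \geq 1$ yields the stated bound for some $\beta_1, \beta_2 > 0$.

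The only point requiring care, rather than a genuine obstacle, is the bookkeeping of the logarithmic KMT shift: one must choose $\beta_2$ large enough (specifically $\beta_2 \geq \beta_1 C_1$) so that the factor $T^{\beta_2}$ absorbs the $C_1\log N$ correction inside the exponential. In essence this lemma is a direct repackaging of KMT tailored to Laplace-increment walks, and is precisely what Aurzada and Dereich establish in a more general setting. No probabilistic input beyond KMT and the standard Brownian-oscillation estimate is required.
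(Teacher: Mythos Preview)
The paper does not supply its own proof of this lemma; it is quoted directly from Aurzada and Dereich~\cite{aurzada2013universality} as an off-the-shelf strong approximation result. Your sketch is a correct and standard derivation from the classical Komlós--Major--Tusnády theorem: the Laplace increments have a finite moment generating function near the origin, KMT then gives the discrete-time bound with the $C_1\log N$ shift, and absorbing that shift into the polynomial factor $T^{\beta_2}$ is exactly the right bookkeeping. The continuous-time extension via Brownian oscillation is fine, though in fact the paper only ever invokes the lemma at integer times (see the proof of Lemma~\ref{residualbound}), so that step is not strictly needed for the application. In short, there is nothing to compare against in the paper itself, and your argument is essentially how one would reconstruct the cited result from first principles.
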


This allows for a study of $T_j/j$.

\begin{lemma}\label{residualbound}
Suppose that~$\varepsilon, \delta$ and $\gamma$ are positive quantities satisfying \mbox{$\delta + 2\varepsilon<\gamma$}, and let $\beta = 0.01363853235$. Then
$$\P\left(\min_{1 \leq i \leq \lfloor n^\gamma \rfloor} \sum_{j = 1}^i \frac{T_j}{j} \geq - 5  n^{\delta/2} \lceil\log^{3/2}(n)\rceil \right) = O(n^{-\beta(\gamma - \delta-2\varepsilon)})\,.$$
\end{lemma}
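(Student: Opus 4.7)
The plan is to transfer the random-walk event into the Brownian functional studied in Proposition~\ref{prop:exit-time} by means of the KMT coupling of Lemma~\ref{lem:KMT}, and then apply Proposition~\ref{prop:exit-time} directly. Set $N := \lfloor n^\gamma\rfloor$ and $x := (5/\sqrt{2})\, n^{\delta/2}\lceil \log^{3/2}(n)\rceil$. Since the increments $X_j'-X_j$ are symmetric about $0$, the process $(T_j)_{j}$ has the same law as $(-T_j)_{j}$, so the probability in the lemma is equal to $\P\!\left(\max_{1\le i\le N}\sum_{j=1}^i (T_j/\sqrt{2})/j \le x\right)$.

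I would then invoke Lemma~\ref{lem:KMT} with horizon $T=N$ to obtain a standard Brownian motion $B$ on $[0,N]$ such that $W:=\sup_{t\in[0,N]}|T_t/\sqrt{2}-B_t|$ satisfies $\P(W\ge a)\le e^{-\beta_1 a}N^{\beta_2}$. Writing $\tilde Z_n:=\sum_{j=1}^n (T_j/\sqrt{2})/j$ and $Z_n:=\sum_{j=1}^n B_j/j$, the triangle inequality gives
$$\sup_{1\le n\le N}\left|\tilde Z_n - Z_n\right|\le W\sum_{j=1}^N \frac{1}{j}\le W\bigl(1+\log N\bigr),$$
which places the rescaled event inside $\{\max_{n\le N} Z_n \le x+W(1+\log N)\}$ on the event $\{W\le a\}$, and therefore
$$\P\!\left(\max_{n\le N}\tilde Z_n\le x\right)\le \P\!\left(\max_{n\le N}Z_n\le x+a(1+\log N)\right)+\P(W>a).$$

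The next step is to balance $a$. Taking $a:=n^{\delta/2}$ yields $x+a(1+\log N)=O(n^{\delta/2}\log^{3/2}(n))$, which for every sufficiently large $n$ is dominated by $N^\alpha$ with $\alpha:=(\delta+2\varepsilon)/(2\gamma)$; the hypothesis $\delta+2\varepsilon<\gamma$ is precisely what ensures $\alpha\in[0,1/2)$, so Proposition~\ref{prop:exit-time} applies and gives
$$\P\!\left(\max_{n\le N}Z_n\le N^\alpha\right)=O\!\left(N^{-\beta(1-2\alpha)}\right) = O\!\left(n^{-\beta(\gamma-\delta-2\varepsilon)}\right),$$
since $\gamma(1-2\alpha)=\gamma-\delta-2\varepsilon$. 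The KMT tail $\P(W>a)\le N^{\beta_2}\exp(-\beta_1 n^{\delta/2})$ decays super-polynomially in $n$ because $\delta>0$, and so is absorbed into the main bound.

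The main obstacle is the $\log N$ inflation of the KMT error when it is propagated through the harmonic weights $1/j$: this is exactly what forces the cushion $\varepsilon$ in the exponent of the conclusion and prevents one from obtaining the cleaner rate $O(n^{-\beta(\gamma-\delta)})$. Beyond this accounting, the rest of the argument is bookkeeping, combining the symmetry of $T_j$, the KMT coupling, and Proposition~\ref{prop:exit-time}.
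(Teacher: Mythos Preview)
Your argument is correct and follows essentially the same route as the paper: KMT coupling (Lemma~\ref{lem:KMT}) to replace $T_j/\sqrt{2}$ by $B_j$, a harmonic-sum bound on the resulting error, and then Proposition~\ref{prop:exit-time}. The only cosmetic differences are that the paper takes $a=\log^2(n)$ rather than $a=n^{\delta/2}$ in the KMT bound, and invokes the symmetry of $Z$ at the end rather than the symmetry of $T$ at the start; neither choice affects the structure or the final exponent.
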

\begin{proof}
Fix $\eps > 0$, which will be chosen suitably later. Applying Lemma \ref{lem:KMT} with $a = \log^2(n)$ and $T = \lfloor n^\gamma \rfloor$, we have 
$$\P(F) \leq e^{-\beta_1 \log^2(n)} n^{\gamma \beta_2}\,,$$
where
$$F := \left\{\sup_{t \in [0,\lfloor n^\gamma \rfloor]} \left| \frac{T_t}{\sqrt{2}} - B_t\right| \geq \log^2(n)\right\}~.$$
Now, on $F^c$ we have for sufficiently large $n$ that
\begin{align*}
\sup_{1\leq i \leq \lfloor n^\gamma\rfloor} \left|\sum_{j=1}^i \frac{T_j}{j} - \sqrt{2}\sum_{j=1}^i \frac{B_j}{j}\right| &\leq \sup_{1\leq i \leq \lfloor n^\gamma\rfloor} \sum_{j=1}^i \frac{|T_j - \sqrt{2} B_j|}{j}\\
&\leq\sqrt{2} \log^2(n) \sum_{j=1}^{\lfloor n^\gamma \rfloor} \frac{1}{j}\\
&\leq 2\gamma \log^3(n).
\end{align*}	
Hence, on $F^c$, we have for sufficiently large $n$ that
$$\min_{1 \leq i \leq \lfloor n^\gamma \rfloor} Z_i \geq \frac{1}{\sqrt{2}}\min_{1 \leq i \leq \lfloor n^\gamma \rfloor} \sum_{j = 1}^i \frac{T_j}{j} - \sqrt{2}\gamma \log^3(n)~.$$
Thus, for sufficiently large $n$,
\begin{align*}&\P\left(\min_{1 \leq i \leq \lfloor n^\gamma \rfloor} \sum_{j = 1}^i \frac{T_j}{j} \geq - 5  n^{\delta/2} \lceil\log^{3/2}(n)\rceil \right)\\ &\leq  e^{-\beta_1 \log^2(n)} n^{\gamma \beta_2} + \P\left(\min_{1 \leq i \leq \lfloor n^\gamma \rfloor} Z_i \geq - 3\sqrt{2}  n^{\delta/2} \lceil\log^{3/2}(n)\rceil - \sqrt{2}\gamma \log^3(n)\right)\\
&\leq e^{-\beta_1 \log^2(n)} n^{\gamma \beta_2} + \P\left(\max_{1 \leq i \leq \lfloor n^\gamma \rfloor} Z_i \leq  2 \lfloor n^\gamma \rfloor^{(\delta+2\varepsilon)/(2\gamma)} \right)\\
&\leq e^{-\beta_1 \log^2(n)} n^{\gamma \beta_2} + \lfloor n^\gamma \rfloor ^{-\frac{\beta}{\gamma}(\gamma-\delta-2\varepsilon)}\quad\quad\quad\quad\quad\quad\quad\quad\quad(\textrm{by Proposition \ref{prop:exit-time}})\\&= O\left(n^{-\beta(\gamma-\delta-2\varepsilon)}\right)~,
\end{align*}
as desired.
\end{proof}

We can now complete our proof of Theorems \ref{thm:main} and \ref{thm:comparable}.

\begin{proof}[Proof of Theorems \ref{thm:main} and \ref{thm:comparable}]
It follows from Proposition \ref{prop:uppersum} and Lemma \ref{residualbound} that
\begin{equation}\label{finalbound}
\max\{r(n), p(n)\} \leq O\left(n^{-1/2 + 2 \gamma} \log^3(n)+n^{-\delta/2} + n^{-\beta(\gamma-\delta-2\varepsilon)}\right)~,
\end{equation}
where $\beta = 0.01363853235$ and $\delta, \gamma, \varepsilon$ are any three positive constants, subject to the constraint $\delta+2\varepsilon <\gamma < 1/4$. In order to obtain the most efficient bound for $p(n)$ from \eqref{finalbound}, we thus need to evaluate 
$$\max_{\delta,\gamma,\varepsilon} \min \left\{\frac{1}{2}-2\gamma, \frac{\delta}{2}, \beta(\gamma-\delta-2\varepsilon)\right\}$$ 
subject to the constraints $\delta,\gamma,\varepsilon > 0$ and $\delta+2\varepsilon <\gamma < 1/4$.  We may take $\eps$ arbitrarily close to $0$, thus requiring we maximize $$\max_{\delta,\gamma} \min \left\{\frac{1}{2} - 2\gamma,\frac{\delta}{2},\beta(\gamma - \delta) \right\}$$
subject to $\gamma, \delta > 0, \delta < \gamma < \frac{1}{4}$.  Since setting any of $\gamma = \delta, \delta = 0$ and $\gamma = \frac{1}{4}$ yields a value of zero, the maximum is achieved in the interior of the region that $(\delta,\gamma)$ varies over.  Since an increase in either $\gamma$ or $\delta$ increases one term and decreases another, the maximum must be achieved when all three terms are equal, which implies $\delta = 0.006594420627$ and $\gamma = 0.2483513948$.  Theorems \ref{thm:main} and \ref{thm:comparable} follow.
\end{proof}

\section{Open Problems}

In light of Theorem \ref{thm:main} along with the lower bound of $p(2n) = \Omega(n^{-1/2})$ provided by Erd\H{o}s and Richmond \cite{ErdosRichmond1993}, it remains to closing the gap between the two bounds:

\begin{problem}
	Let $p(n)$ denote the probability that a uniformly random partition of $n$ is graphical.  Does there exist an $\alpha$ so that 
	$$p(2n) = n^{-\alpha + o(1)}$$
	as $n\to\infty$?  If so, what is $\alpha$ equal to?
\end{problem} 

Similarly, the analogous question remains for comparability: 

\begin{problem}
	Let $r(n)$ denote the probability that two uniformly random partitions of $n$ are comparable in the dominance order.  Does there exist a $\beta$ so that 
	$$p(n) = n^{-\beta + o(1)}$$
	as $n\to\infty$?  If so, what is $\beta$ equal to?
\end{problem}

Recalling that $\mu \preceq \lambda \iff K_{\lambda,\mu} > 0$ where $K_{\lambda,\mu}$ are the Kostka numbers, Theorem \ref{thm:comparable}---and Pittel \cite{Pittel1999,Pittel2018} before---shows that Kostka numbers are typically $0$.  As a more open-ended question, we ask for the typical behavior of Kostka numbers conditioned on being positive:

\begin{question}
	Let $\lambda$ and $\mu$ be partitions of $n$ independently chosen uniformly at random.  What is the typical behavior of the conditioned random variable $(K_{\lambda,\mu} \,|\, K_{\lambda,\mu} > 0 )$?
\end{question} 

\section*{Acknowledgments}
We would like to thank Bruce Richmond for bringing our attention to this problem; the authors would also like to thank Robin Pemantle and Cheng Ouyang for useful discussions.

\bibliographystyle{alpha}
\bibliography{bibl}

\end{document}